\newtheorem{thm}{Theorem}[section]
\newtheorem{cor}[thm]{Corollary}
\newtheorem{prop}[thm]{Proposition}
\newtheorem{rem}[thm]{Remark}
\newcommand{\SD}{\mathcal{S}_{\partial D}}
\newcommand{\SO}{\mathcal{S}_{\partial \Omega}}
\newcommand{\KDS}{\mathcal{K}_{\partial D}^{*}}
\newcommand{\KOS}{\mathcal{K}_{\partial \Omega}^{*}}
\newcommand{\bef}{\begin{figure}}
\newcommand{\enf}{\end{figure}}
\newcommand{\ds}{\displaystyle}
\newcommand{\p}{\partial}
\newcommand{\eqnref}[1]{(\ref {#1})}
\newcommand{\Rbb}{\mathbb{R}}
\newcommand{\la}{\langle}
\newcommand{\ra}{\rangle}
\newcommand{\Acal}{\mathcal{A}}
\newcommand{\Kcal}{\mathcal{K}}
\newcommand{\Scal}{\mathcal{S}}
\newcommand{\Ga}{\alpha}
\newcommand{\Gb}{\beta}
\newcommand{\Gd}{\delta}
\newcommand{\Gve}{\varepsilon}
\newcommand{\Gvf}{\varphi}
\newcommand{\Gg}{\gamma}
\newcommand{\Gl}{\lambda}
\newcommand{\Gm}{\mu}
\newcommand{\Gr}{\rho}
\newcommand{\Gs}{\sigma}
\newcommand{\Go}{\omega}
\newcommand{\GD}{\Delta}
\newcommand{\GO}{\Omega}
\newcommand{\beq}{\begin{equation}}
\newcommand{\eeq}{\end{equation}}
\def\ol{\overline}
\numberwithin{equation}{section}
\numberwithin{figure}{section}
\begin{document}
\title{Polarization tensor vanishing structure of general shape: Existence for small perturbations of balls\thanks{\footnotesize This paper is a revised version (with the title changed) of the earlier manuscript arXiv:1911.07250. This work is supported by NRF grants No. 2017R1A4A1014735 and 2019R1A2B5B01069967,  JSPS KAKENHI Grant No. 18H01126, NSF of China grant No. 11901523, and a grant from Central South University.}}

\author{Hyeonbae Kang\thanks{Department of Mathematics and Institute of Applied Mathematics, Inha University, Incheon 22212, S. Korea, and Department of Mathematics and Statistics, Central South University, Changsha, Hunan, P.R. China (hbkang@inha.ac.kr).}  \and Xiaofei Li\thanks{College of Science, Zhejiang University of Technology, Hangzhou, 310023, P. R. China (xiaofeilee@hotmail.com).}  \and Shigeru Sakaguchi\thanks{Research Center for Pure and Applied Mathematics, Graduate School of Information Sciences, Tohoku University, Sendai, 980-8579, Japan (sigersak@tohoku.ac.jp).}}

\date{\today}
\maketitle

\begin{abstract}
The polarization tensor is a geometric quantity associated with a domain. It is a signature of the small inclusion's existence inside a domain and used in the small volume expansion method to reconstruct small inclusions by boundary measurements. In this paper, we consider the question of the polarization tensor vanishing structure of general shape. The only known examples of the polarization tensor vanishing structure are concentric disks and balls. We prove, by the implicit function theorem on Banach spaces, that a small perturbation of a ball can be enclosed by a domain so that the resulting inclusion of the core-shell structure becomes polarization tensor vanishing. The boundary of the enclosing domain is given by a sphere perturbed by spherical harmonics of degree zero and two. This is a continuation of the earlier work \cite{KLS2D} for two dimensions.
\end{abstract}

\noindent{\footnotesize {\bf AMS subject classifications}. 35Q60 (primary); 31B10, 35B40, 35R30, 35R05 (secondary)}

\noindent{\footnotesize {\bf Key words}. Polarization tensor, polarization tensor vanishing structure, weakly neutral inclusion, neutral inclusion, existence, perturbation of balls, implicit function theorem, invisibility cloaking}

\section{Introduction}

In the inverse conductivity problem or the electrical impedance tomography, the measurement of boundary data is utilized to reconstruct inclusions buried inside the domain. When the inclusion is of small size, the small volume expansion shows that the leading order term of the boundary perturbation is expressed by the polarization tensor (abbreviated by PT afterwards) associated with the inclusion. Thus the polarization tensor is a signature of inclusion's existence, which can be effectively used to reconstruct the inclusion (see, for example, \cite{AK04, AK11, AKKL, BP, CV, CMV, VV}).

This paper is concerned with the problem of the opposite direction: hiding inclusions by making the PT vanish. Since the PT for simply connected homogeneous domain is either positive- or negative-definite, we consider the inclusions of core-shell structure. It is known that concentric disks and balls can be made to be PT vanishing (see \eqnref{neutral} below), and these are the only known examples of the PT-vanishing inclusions.

We are concerned with the following question:

\begin{itemize}
\item[]
{\bf Polarization Tensor Vanishing Structure}. {\sl Find a domain $\GO$ enclosing the given domain $D$ of arbitrary shape so that polarization tensor of the resulting inclusion $(D,\GO)$ of the core-shell structure vanishes.}
\end{itemize}

\noindent The purpose of this paper is to prove that if the core $D$ is a small perturbation of a ball in three dimensions, then there is $\GO$ enclosing $\ol{D}$ such that the inclusion $(D,\GO)$ becomes PT-vanishing. This is a continuation of the work \cite{KLS2D}, where a similar result is proved in two dimensions. So, we move directly to description of the problem and statement of the result leaving additional motivational remarks and historical accounts to that paper and references therein (see also recent survey article \cite{JKLS}).

To define the PT-vanishing structure of the core-shell shape, let $D$ and $\GO$ be bounded simply connected domains in $\Rbb^d$ ($d=2,3$) such that $\ol{D}\subset \GO$. The pair $(D, \GO)$ of domains may be regarded as an inclusion of the core-shell structure where the core $D$ is coated by the shell $\GO\setminus D$. Let $\Gs$ be a piecewise constant function, representing the conductivity distribution, defined by
\beq\label{conductivity}
\Gs=
\begin{cases}
\Gs_c \quad &\mbox{in } D, \\
\Gs_s \quad &\mbox{in } \GO\setminus D, \\
\Gs_m \quad &\mbox{in } \Rbb^d\setminus \GO,
\end{cases}
\eeq
where the conductivities $\Gs_c$, $\Gs_s$ and $\Gs_m$ of the core, the shell and the matrix are assumed to be isotropic (scalar).
We then consider the following conductivity problem:
\beq\label{main}
\begin{cases}
\nabla\cdot\Gs\nabla u=0 \quad &\mbox{in }\Rbb^d,\\
u(x)-a \cdot x=O(|x|^{-d+1})\quad&\mbox{as }|x|\rightarrow\infty,
\end{cases}
\eeq
where $a$ is a unit vector representing the background uniform field.

In absence of the inclusion $(D,\GO)$ the field is uniform, i.e., $\nabla u =a$. This uniform field is perturbed by insertion of the inclusion and the perturbation is not zero in general. It is known that the solution $u$ to \eqref{main}, or the perturbation $u-a\cdot x$, admits the following dipole asymptotic expansion:
\beq\label{farfield}
u(x)-a\cdot x = \frac{1}{\Go_d} \frac{\la Ma, x \ra}{|x|^d} + O(|x|^{-d}), \quad |x| \to \infty,
\eeq
where $\Go_d$ is the surface area of $S^{d-1}$, the $(d-1)$-dimensional sphere, and $M$ is a $d \times d$ matrix and called the polarization tensor (PT), which is determined by the inclusion $(D, \GO)$ and conductivity ratios $(\Gs_c/\Gs_m, \Gs_s/\Gs_m)$, namely,
$$
M=M(D, \GO)=M(D,\GO;\Gs_c/\Gs_m, \Gs_s/\Gs_m)
$$
(see, for example, \cite{AmKa07Book2, mbook}). The question of the PT-vanishing structure can be rephrased as follows: Given $D$ of arbitrary shape, find $\GO \supset \ol{D}$ so that $M(D, \GO)=0$.

If $D$ is a disk or a ball, then one can choose $\GO$ to be a concentric disk or ball and the conductivity parameters so that the perturbation $\nabla u - a$ of the uniform field $a$ is zero outside $\GO$. In fact, if $D=\{|x|<r_i\}$ and $\GO=\{|x|<r_e\}$ in $\Rbb^3$, and if the following relation among conductivities and the volume fractions holds:
\beq\label{neutral}
(2\Gs_s + \Gs_c)(\Gs_m-\Gs_s)+ \rho^3 (\Gs_s-\Gs_c)(2\Gs_s+\Gs_m)=0,
\eeq
where $\rho=r_i/r_e$ and $\rho^3$ is the volume fraction, then the solution $u$ to \eqnref{main} satisfies
\beq\label{neutral2}
u(x)-a \cdot x \equiv 0 \quad\mbox{for all } x \in \Rbb^3\setminus \GO.
\eeq
This discovery of Hashin and Shtrikman \cite{H1, H2} has laid significant implications in the theory of composite for which we refer to \cite{mbook}.

The inclusion $(D,\GO)$ is said to be neutral to multiple uniform fields if \eqnref{neutral2} holds for all constant vector $a$. However, a pair of concentric balls is the only structure neutral to multiple uniform fields as proved in \cite{KLS3d}. It is worth mentioning that the problem \eqnref{main} is well-posed even if $\Gs_m$ is a positive-definite matrix. It is believed to be true, but has not been proved, that if $\Gs_m$ is a positive-definite matrix, then the only inclusion neutral to multiple uniform fields is a pair of confocal ellipsoids whose common foci are determined by the eigenvalues of $\Gs_m$. See \cite{KLS3d} for descriptions of this problem and a related over-determined problem (see also \cite{JKLS}). The question in two dimensions has been solved \cite{KL2d, M.S}.

While the problem \eqnref{main} requires $u(x)-a \cdot x=O(|x|^{-d+1})$ at $\infty$ and the neutrality requires \eqnref{neutral2}, the PT-vanishing property requires in-between them, namely,
\beq\label{weakneutral}
u(x)-a \cdot x=O(|x|^{-d})\quad\mbox{as }|x|\rightarrow\infty,
\eeq
as one can see from \eqnref{farfield}. This is the reason why the PT-vanishing inclusion is also called the weakly neutral inclusion, as used in the title of the earlier version of the manuscript (arXiv:1911.07250v1). However, the name `PT-vanishing structure' seems to convey the meaning more directly, and the title has been changed accordingly in this new version of the manuscript. 

To present the main result of this paper in a precise manner, let $S^2$ be the unit sphere in $\Rbb^3$ and let $W^{2,\infty}(S^2)$ be the collection of all functions $f$ on $S^2$ such that
$$
\| f \|_{2,\infty}: = \| f \|_\infty + \| \nabla_T f \|_\infty + \| \nabla_T^2 f \|_\infty < \infty,
$$
where $\nabla_T$ and $\nabla_T^2$ be tangential gradient and Hessian on $S^2$, and $\| \cdot \|_p$ denotes the usual $L^p$ norm. The space
$W^{1,\infty}(S^2)$ with norm $\| \cdot \|_{1,\infty}$  is defined similarly.

Let $D_0: =\{|x|<r_i\}$ for some radius $r_i$. The core in this paper is defined to be a perturbation of $D_0$ by a function $h \in W^{2,\infty}(S^2)$. Denoting it by $D_h$, it is defined by
\beq\label{Dh}
\p D_h=\left\{ ~x ~|~x =(r_i+ h(\hat{x}))\hat{x}, \quad |\hat{x}|=1 ~ \right\}.
\eeq
The shell is defined also to be a perturbation of a ball. Let $\GO_0:= \{|x|<r_e\}$ ($r_e>r_i$) and define its perturbation by
\beq\label{GOb}
\p \GO_b =\left\{ ~x ~|~x =(r_e + b(\hat{x})) \hat{x}, \quad |\hat{x}|=1 ~ \right\}.
\eeq
The perturbation function $b$ for the shell is chosen from a subclass of $W^{2,\infty}(S^2)$: Let
\beq\label{Yl}
\{Y_l\}_{l=1}^6:=\left\{ \frac{1}{\sqrt{15}},  \, \hat{x}_1\hat{x}_2, \, \hat{x}_2\hat{x}_3, \, \frac{1}{2\sqrt{3}}  (-\hat{x}_1^2-\hat{x}_2^2+2\hat{x}_3^2), \, \hat{x}_1\hat{x}_3, \, \frac{1}{2} (\hat{x}_1^2-\hat{x}_2^2) \right\}.
\eeq
We mention that $Y_1$ is constant (a spherical harmonics of order 0) and $Y_l$, $2 \le l \le 6$, is a spherical harmonics of order 2, and $Y_l$, $1 \le l \le 6$, are mutually orthogonal and normalized so that the following holds for all $l$:
\beq\label{normalize}
\int_{S^2} |Y_l|^2 dS = \frac{4\pi}{15}.
\eeq
We take this normalization just for ease of notation. Let $W_6$ be the space spanned by $\{ Y_l \}$ and $\GO_b$ is defined for $b \in W_6$.

If $h$ and $b$ are sufficiently small, then $\ol{D_h} \subset \GO_b$ and hence the PT corresponding to $(D_h, \GO_b)$, which is denoted by $M=M(h,b)$, is well-defined. We choose $r_e$, the radius of $\GO_0$, so that $r_i$ and $r_e$ satisfy neutrality condition \eqnref{neutral} for given conductivities $\Gs_c$, $\Gs_s$ and $\Gs_m$. For that, $\Gs_c$, $\Gs_s$ and $\Gs_m$ need to satisfy
\beq\label{neutral3}
0 < \frac{(2\Gs_s + \Gs_c)(\Gs_s-\Gs_m)}{(2\Gs_s+\Gs_m)(\Gs_s-\Gs_c)} <1.
\eeq
Then $(D_0,\GO_0)$ is neutral, namely, $M(0,0)=0$.

The following is the main result of this paper:
\begin{thm}\label{main_thm}
Given $r_i$, let $r_e$ satisfy the neutrality condition \eqnref{neutral}.
There is $\Gve>0$ such that for each $h \in W^{2,\infty}(S^2)$ with $\| h \|_{2,\infty}<\Gve$ there is $b=b(h) \in W_6$ such that
\beq
M(h, b(h))=0,
\eeq
namely, the inclusion $(D_h, \GO_{b(h)})$ of the core-shell structure is PT-vanishing. The mapping $h \mapsto b(h)$ is continuous.
\end{thm}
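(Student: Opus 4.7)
The plan is to apply the implicit function theorem on Banach spaces to the map
$$F: U \times V \to \mathrm{Sym}(3), \qquad F(h,b) := M(D_h, \GO_b),$$
where $U$ is a neighborhood of $0$ in $W^{2,\infty}(S^2)$, $V$ is a neighborhood of $0$ in the $6$-dimensional space $W_6$, and $\mathrm{Sym}(3)$ is the $6$-dimensional space of symmetric $3\times 3$ matrices. For scalar conductivities the PT is symmetric, so the codomain dimension agrees with $\dim W_6$. The choice of $r_e$ through the neutrality condition \eqnref{neutral} yields $M(0,0)=0$, i.e.\ $F(0,0)=0$. Our goal is to solve $F(h,b)=0$ locally by $b=b(h)$, which is exactly the assertion of the theorem.

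To apply the implicit function theorem we need (i) $F\in C^1$ near $(0,0)$, and (ii) the partial derivative $D_bF(0,0)\colon W_6\to\mathrm{Sym}(3)$ to be a linear isomorphism. For (i) I would represent $M(D_h,\GO_b)$ through layer potentials: the densities $\GF,\GY$ satisfying the standard two-interface system on $\p D_h\cup\p\GO_b$ (built from $\Si,\Se,\Kis,\Kes$ with the appropriate conductivity contrasts) determine $u-a\cdot x$, and the PT is read off as the leading dipole coefficient of its far-field expansion \eqnref{farfield}. Parametrizing $\p D_h$ and $\p\GO_b$ by the radial graphs \eqnref{Dh} and \eqnref{GOb}, I would pull back all operators to the fixed sphere $S^2$; the resulting family is smooth in $(h,b)\in W^{2,\infty}(S^2)\times W_6$ as a family of bounded operators between suitable H\"older or Sobolev spaces on $S^2$, and is invertible at $(h,b)=(0,0)$. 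A standard application of the implicit function theorem to the pulled-back integral equations then yields densities depending smoothly on $(h,b)$, and hence $F\in C^1$ (in fact analytic) near the origin.

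The main obstacle is (ii), the invertibility of $D_bF(0,0)$. Here I would exploit the rotational symmetry of the unperturbed concentric configuration and the fact that the Neumann--Poincar\'e operators on $\p D_0$ and $\p\GO_0$ are diagonalized by spherical harmonics. Differentiating the integral equations with respect to $b$ at $(h,b)=(0,0)$ produces a linear inhomogeneity expressible in terms of the very harmonics $\{Y_l\}$ that span $W_6$, and the induced density perturbation can be expanded harmonic-by-harmonic on each sphere. By rotational covariance, perturbation by $Y_1$ (an infinitesimal radial translation of $\p\GO_0$) produces a change $\Gd M$ that must be a scalar multiple of the identity; the scalar coefficient is nonzero precisely because the conductivities lie in the admissible range \eqnref{neutral3}, which guarantees that the relevant denominator does not degenerate. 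For $l=2,\dots,6$, each $Y_l$ is of the form $\hat x^T A_l\hat x$ with $A_l$ trace-free symmetric, and the same rotational-covariance argument forces $\Gd M$ to be a scalar multiple of $A_l$ with the same nonzero proportionality constant (independent of $l$); since $\{A_l\}_{l=2}^6$ form a basis of the $5$-dimensional trace-free symmetric subspace, $D_bF(0,0)$ maps the basis $\{Y_l\}_{l=1}^6$ of $W_6$ to a basis of $\mathrm{Sym}(3)$.

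Granted (i) and (ii), the implicit function theorem supplies $\Gve>0$ and a $C^1$ (hence continuous) map $h\mapsto b(h)\in W_6$ defined on $\{h\in W^{2,\infty}(S^2):\|h\|_{2,\infty}<\Gve\}$ with $b(0)=0$ and $F(h,b(h))=0$, which is precisely the conclusion of Theorem \ref{main_thm}. The hardest technical step is the explicit computation of the two proportionality constants in the previous paragraph using the spherical-harmonic spectra of $\Kis$ and $\Kes$ at $(h,b)=(0,0)$ and checking their non-vanishing under \eqnref{neutral3}; once this is done, the symmetry analysis identifying each $\Gd M$ with a multiple of $A_l$ is essentially forced by rotational equivariance.
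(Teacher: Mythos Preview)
Your overall strategy is the same as the paper's: apply the implicit function theorem to $F(h,b)=M(D_h,\GO_b)$ on $W^{2,\infty}(S^2)\times W_6$, using $F(0,0)=0$ from the neutrality condition and invertibility of $D_bF(0,0)$. Where you genuinely differ is in how you propose to show $D_bF(0,0)$ is invertible. The paper parametrizes the integral operators on $S^2$, differentiates the kernels explicitly in each $b_j$, and computes all entries of the $6\times6$ Jacobian via Funk--Hecke identities, obtaining a block structure whose determinant is a nonzero constant times $(\tfrac{1}{2}+3\mu)$. Your $O(3)$-equivariance/Schur argument is a clean shortcut: it correctly predicts the block decomposition $\langle Y_1\rangle\oplus\langle Y_2,\dots,Y_6\rangle\to \Rbb I\oplus\mathrm{Sym}_0(3)$ with two independent scalars $c_1,c_2$, so that only two entries (not thirty-six) need to be computed. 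This buys conceptual clarity and explains \emph{why} the paper's matrix has the shape it does; the paper's route, in turn, produces the explicit values of $c_1,c_2$ (and in particular shows $c_2\propto 28/45$ is automatically nonzero, while $c_1\propto \tfrac12+3\mu$ is nonzero for any positive conductivities). Your remark that $c_1\neq 0$ ``precisely because of \eqnref{neutral3}'' is slightly off---the nonvanishing follows already from positivity of the conductivities---but this is harmless.

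One technical point is worth flagging. You assert $F\in C^1$ (even analytic) after pulling back the layer potentials to $S^2$. The paper is more cautious here: in three dimensions the pulled-back Neumann--Poincar\'e kernels $A_h,B_b$ carry a genuine $|x-y|^{-1}$ singularity, and the paper only establishes \emph{weak} (not operator-norm) continuity of $\Acal(h,b)$ at $(h,b)\neq(0,0)$, together with continuous differentiability in $b$ alone. It then invokes a form of the implicit function theorem (continuity in both variables, $C^1$ in the finite-dimensional variable) rather than full joint $C^1$. Your sketch ``standard pullback, smooth family of bounded operators'' glosses over this; it may well be true in stronger spaces, but you would need to supply that argument, whereas the paper's weaker hypotheses are what is actually verified.
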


Let us briefly describe how Theorem \ref{main_thm} is proved. Since $M=M(h,b)=(m_{ij})_{i,j=1}^3$ is a symmetric matrix, we can identify $M$ with $(m_{11}, m_{22}, m_{33},  m_{12}, m_{13}, m_{23})$. We then regard $M$ as a function from $U \times V$ into $\Rbb^6$, where $U$ is a small neighborhood of $0$ in $W^{2,\infty}(S^2)$ and $V$ is a small neighborhood of $0$ in $W_6$ identified with $\Rbb^6$, i.e.,
$$
M: U \times V \subset W^{2,\infty}(S^2) \times \Rbb^6 \to \Rbb^6.
$$
Moreover, since $(D_0, \GO_0)$ is neutral to multiple fields, it is PT-vanishing, namely, $M(0,0)=0$. We then show the Jacobian determinant of $M$ is non-zero, namely,
\beq\label{bJacob}
\frac{\p (m_{11}, m_{22}, m_{33},  m_{12}, m_{13}, m_{23})}{\p (b_1,b_2, b_3,b_{4},b_{5},b_6)} (0,0) \neq 0.
\eeq
Then, Theorem \ref{main_thm} follows from the implicit function theorem (Theorem \ref{thm:ift}).

The idea and structure of the proof are the same as those in \cite{KLS2D}. However, since we are dealing with spherical harmonics in three dimensions in this paper, details are much more involved.

By switching roles of $h$ and $b$, we obtain the following theorem:
\begin{thm}\label{main_thm2}
Given $r_e$, let $r_i$ satisfy the neutrality condition \eqnref{neutral}.
There is $\Gve>0$ such that for each $h \in W^{2,\infty}(S^2)$ with $\| h \|_{2,\infty}<\Gve$ there is $b=b(h) \in W_6$ such that
the inclusion $(D_{b(h)}, \GO_h)$ of the core-shell structure is PT-vanishing. The mapping $h \mapsto b(h)$ is continuous.
\end{thm}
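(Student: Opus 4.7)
The plan is to recycle the strategy of Theorem \ref{main_thm} after interchanging the roles of the inner- and outer-boundary perturbations. View the polarization tensor of $(D_b, \GO_h)$ as a map
$$
\widetilde{M}: \widetilde{U}\times\widetilde{V} \subset W_6 \times W^{2,\infty}(S^2) \to \Rbb^6,
$$
where $b \in W_6 \cong \Rbb^6$ parametrizes the perturbation of the inner ball of radius $r_i$ and $h \in W^{2,\infty}(S^2)$ parametrizes the perturbation of the outer ball of radius $r_e$. Since $r_i, r_e$ are chosen to satisfy \eqnref{neutral}, the pair $(D_0, \GO_0)$ is neutral, hence $\widetilde{M}(0,0)=0$. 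It therefore suffices to show that the partial Jacobian $\p \widetilde{M}/\p b$ at $(0,0)$ is an invertible $6\times 6$ matrix; the implicit function theorem then produces a continuous map $h \mapsto b(h)$ with $\widetilde{M}(b(h), h) = 0$, which is Theorem \ref{main_thm2}.

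The computation of $\p \widetilde{M}/\p b(0,0)$ is parallel to the derivation of \eqnref{bJacob}. The PT is read off from the dipole coefficient in the far-field expansion \eqnref{farfield} of the solution of \eqnref{main}, which is represented by single-layer potentials on $\p D_b$ and $\p \GO_h$ whose densities solve a transmission system of integral equations. Shape-differentiating the system in $b$ perturbs only the inner interface, so the Hadamard formula reduces each component of $\p \widetilde{M}/\p b_l$ at $(0,0)$ to an integral on $\p D_0$ driven by the normal velocity $Y_l$. On the sphere the Neumann--Poincar\'e operator is diagonal in the spherical-harmonics basis with eigenvalues $1/(2(2l+1))$, and the integral collapses to an explicit $L^2(S^2)$ pairing of $Y_l$ with the quadratic polynomial $\hat{x}_i\hat{x}_j$, weighted by a rational function of $r_i,r_e,\Gs_c,\Gs_s,\Gs_m$. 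By the choice of the basis $\{Y_l\}_{l=1}^6$ in \eqnref{Yl} together with the orthogonality \eqnref{normalize}, this matrix is diagonal in the identification $\widetilde{M}\leftrightarrow(\widetilde m_{11},\widetilde m_{22},\widetilde m_{33},\widetilde m_{12},\widetilde m_{13},\widetilde m_{23})$.

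The main obstacle is to show that none of the six diagonal entries vanishes. Each entry has the same algebraic form as the corresponding entry in \eqnref{bJacob} under the substitutions $r_e\mapsto r_i$ and $\Gs_m\mapsto\Gs_c$ (with $\Gs_s$ unchanged). I would verify by direct inspection that the only configurations making one of these rational expressions vanish are the degenerate ones excluded by \eqnref{neutral3}; in particular, the neutrality relation \eqnref{neutral} itself does not force a zero. Once the non-singularity of $\p \widetilde{M}/\p b(0,0)$ is established, the implicit function theorem on Banach spaces applies verbatim and delivers the continuous map $h\mapsto b(h)$, completing the proof.
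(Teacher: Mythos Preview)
Your overall strategy---swap the roles of inner and outer perturbations, check $\widetilde M(0,0)=0$, verify that $\partial\widetilde M/\partial b(0,0)$ is nonsingular, and invoke the implicit function theorem---is exactly what the paper does in its one-paragraph Remark. However, two of your computational claims are wrong, and together they leave a real gap.

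First, the Jacobian is \emph{not} diagonal in the basis $(\widetilde m_{11},\widetilde m_{22},\widetilde m_{33},\widetilde m_{12},\widetilde m_{13},\widetilde m_{23})$ against $(b_1,\dots,b_6)$. The off-diagonal PT entries $\widetilde m_{12},\widetilde m_{13},\widetilde m_{23}$ do each couple to a single $Y_l$ (namely $Y_2,Y_5,Y_3$), but the diagonal entries $\widetilde m_{11},\widetilde m_{22},\widetilde m_{33}$ are coupled to \emph{all three} of $Y_1,Y_4,Y_6$. This is forced by the representation theory: $Y_1$ is the constant (a volume change, which moves all $\widetilde m_{ll}$ equally), while $Y_4,Y_6$ span the diagonal part of the traceless symmetric $2$-tensors. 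So the Jacobian has a $3\times 3$ block that must be analyzed, and the ``none of the six diagonal entries vanishes'' reduction is not available. In the paper's computation (for the original theorem) this $3\times 3$ block contributes the factor $\frac{4}{3\sqrt{15}}\bigl(\tfrac12+3\mu\bigr)\cdot\frac{28}{45\sqrt{3}}\cdot 6$ to the determinant, and the same block appears here.

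Second, the shortcut ``same algebraic form under $r_e\mapsto r_i$, $\Gs_m\mapsto\Gs_c$'' is not correct. The PT is defined by the far-field expansion in the matrix, so there is no inside/outside symmetry that trades $\Gs_m$ for $\Gs_c$. In fact the paper's parallel computation for the inner perturbation produces the \emph{same} nonvanishing factor $(\tfrac12+3\mu)$ (with $\mu$ still built from $\Gs_s,\Gs_m$, not $\Gl$), and the only change in the determinant is the overall prefactor $(\Gr r_e^2\Gg_1\pi)^6$ in place of $(\Gr^3 r_e^2\Gg_1\pi)^6$. To close the argument you must actually redo the differentiation with respect to the inner interface: now $\p_j B=0$, $\p_j A\neq 0$, and the roles of $C$ and $D$ in the analogue of \eqnref{CD} are exchanged. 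The computation runs parallel to Section~\ref{sec:com} and yields the nonzero determinant quoted in the paper's Remark; your proposed substitution does not.
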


This paper is organized as follows. In section \ref{sec:integral}, we review the definition of the PT in terms of a system of integral equations, and prove continuity and differentiability of the relevant integral operator. Section \ref{sec:com} includes some preliminary computations of quantities to be used in proving Theorem \ref{main_thm}, which is proved in section \ref{sec:proof}. This paper ends with a short conclusion.

\section{The integral equations and its stability properties}\label{sec:integral}

\subsection{Preliminary: layer potentials and PT}\label{subsec:PT}

Let $G(x)$ be the fundamental solution to the Laplacian, that is, $G(x)= 1/(2\pi) \log |x|$ in two dimensions, and $G(x)=- (4\pi |x|)^{-1}$ in three dimensions. Let $D$ be a bounded simply connected domain with Lipschitz continuous boundary. Let $\Scal_{\p D}$ and $\Kcal_{\p D}^*$ be the single layer potential and the Neumann-Poincar\'e operator, respectively, namely, for a function $\Gvf \in L^2(\p D)$
\beq
\Scal_{\p D}[\Gvf](x) := \int_{\p D} G(x-y) \Gvf(y)\, dS(y), \quad x\in \Rbb^3,
\eeq
and
\beq
\Kcal_{\p D}^*[\Gvf](x)= \int_{\p D} \p_{\nu_x} G(x-y) \Gvf(y)\, dS(y),
\eeq
where $dS$ is the surface element on $\p D$ and $\p_{\nu}$ denotes the outward normal derivative on $\p D$. The relation between $\Scal_{\p D}$ and $\Kcal_{\p D}^*$ is given by the following jump formula:
\beq
\p_\nu \Scal_{\p D}[\Gvf](x)\big|_{\pm} = \left( \pm \ds\frac{1}{2}I+\Kcal_{\p D}^{*} \right)[\Gvf](x), \quad\mbox{a.e. } x\in\p D,
\eeq
where $I$ is the identity operator and subscripts $\pm$ denote the limits from outside and inside $D$, respectively.

Let $\GO$ and $D$ be two bounded domains such that $\ol{D} \subset \GO \subset \Rbb^d$, whose boundaries are assumed to be Lipschtiz continuous.  The solution $u_l$ ($1 \le l \le d$) to \eqnref{main} when $a \cdot x= x_l$ is represented as
\beq
u_l(x)=x_l+\SD[\Gvf_1^{(l)}](x)+\SO[\Gvf_2^{(l)}](x),\quad x\in\Rbb^d,
\eeq
where $(\Gvf_1^{(l)},\Gvf_2^{(l)})\in L^2_0(\p D)\times  L^2_0(\p \GO)$ is the unique solution to the system of integral equations
\beq\label{pp}
\begin{bmatrix}
-\Gl I+\KDS & \p_\nu\SO\\
\p_\nu\SD & -\mu I+\KOS
\end{bmatrix}
\begin{bmatrix}
\Gvf_1^{(l)}\\
\Gvf_2^{(l)}
\end{bmatrix}
=-\begin{bmatrix}
\nu^l_{\p D}\\
\nu^l_{\p \GO}
\end{bmatrix}.
\eeq
Here $\nu^l_{\p D}$ is the $l$-th component of the outward unit normal vector $\nu_{\p D}$ to $\p D$, $\nu^l_{\p\GO}$ is defined likewise, and the numbers $\Gl$ and $\Gm$ are given by
\beq\label{lambdamu}
\Gl=\frac{\Gs_c+\Gs_s}{2(\Gs_c-\Gs_s)}\quad\mbox{and}\quad \mu=\frac{\Gs_s+\Gs_m}{2(\Gs_s-\Gs_m)}.
\eeq
Here and afterwards, $L^2_0(\p D)$ denotes the collection of square integrable functions on $\p D$ with the mean zero. We refer to the discussion in \cite{KLS2D} for a proof of unique solvability of \eqnref{pp} on $L^2_0(\p D)\times  L^2_0(\p \GO)$.

The PT $M=M(D, \GO)=(m_{ll'})_{l,l'=1}^d$ of the core-shell structure $(D,\GO)$ is defined by
\beq\label{PT0}
m_{ll'}=\int_{\p D}x_{l'}\Gvf_1^{(l)} \, dS+\int_{\p \GO}x_{l'} \Gvf_2^{(l)} \, dS, \quad l,l'=1,\ldots,d.
\eeq
The expansion \eqnref{farfield} of the solution $u$ to \eqnref{main} is valid with this PT.

\subsection{Parametrizations of integral equations}

We consider the system of integral equations \eqnref{pp} when $D=D_h$ and $\GO=\GO_b$ where $D_h$ and $\GO_b$ are defined by \eqnref{Dh} and \eqnref{GOb}, respectively:
\begin{equation}\label{pp1}
\begin{cases}
\ds \left( -\Gl I+\Kcal^*_{\p D_h} \right) [\Gvf_1] + \p_\nu\Scal_{\p\GO_b} [\Gvf_2] = \psi_1 \quad &\mbox{on } \p D_h ,\\
\ds \p_\nu\Scal_{\p D_h}[\Gvf_1] + \left( -\Gm I+\Kcal^*_{\p\GO_b} \right) [\Gvf_2] = \psi_2 \quad &\mbox{on } \p\GO_b ,
\end{cases}
\end{equation}
on $L^2_0(\p D_h) \times L^2_0(\p\GO_b)$. This system of equations admits a unique solution and there is a constant $C=C(h,b)$ such that
\beq\label{bounded}
\| \Gvf_1 \|_{L^2(\p D_h)} + \| \Gvf_2 \|_{L^2(\p\GO_b)} \le C (\| \psi_1 \|_{L^2(\p D_h)} + \| \psi_2 \|_{L^2(\p\GO_b)}).
\eeq

We now transform \eqnref{pp1} in three dimensions to a system of integral equations on $L^2_0(S^2)^2$ where $S^2$ is the unit sphere. To do so, let
\beq\label{icoordi}
x_{i,h}(x):=(r_i+ h(x))x, \quad x\in S^2,
\eeq
which is a change of variables from $S^2$ onto $\p D_h$.
Then the unit normal vector $\nu(x_{i,h}(x))=:\nu_{i,h}(x)$ on $\p D_h$ is given by the relation
\beq\label{inormal}
J_{i,h}(x) \nu_{i,h}(x) =  (r_i + h(x)) \big[ ( r_i+h(x))x - \nabla_T h(x) \big],
\eeq
where
\beq\label{iJaco}
J_{i,h}(x):=(r_i + h(x))\sqrt{(r_i+h(x))^2  + |\nabla_T h(x)|^2}.
\eeq
The tangential gradient $\nabla_T h(x)$, which was already used in Introduction, is defined to be
\beq\label{tangent}
\nabla_T h(x) = \sum_{j=1}^2 \la \nabla h(x), T_j(x) \ra T_j(x),
\eeq
where $T_1$ and $T_2$ are two unit orthogonal tangent vector fields on $S^2$, and $\nabla h$ is defined after extending $h$ to a tubular neighborhood of $S^2$. Note that $J_{i,h}(x)$ is the Jacobian determinant of the change of variables $x_{i,h}(x)$, namely, the following formula holds:
\beq\label{ichange}
dS(x_{i,h}(x)) = J_{i,h}(x) dS(x), \quad x \in S^2.
\eeq

Likewise, let
\beq\label{ecoordi}
x_{e,b}(x):=(r_e+ b(x))x, \quad x\in S^2,
\eeq
which is a change of variables from $S^2$ onto $\p \GO_b$.
Then the normal vector $\nu(x_{e,b}(x))=:\nu_{e,b}(x)$ on $\p\GO_b$ is given by
\beq\label{enormal}
J_{e,b}(x) \nu_{e,b}(x) =  (r_e + b(x)) \big[ ( r_e+b(x))x - \nabla_T b(x) \big],
\eeq
where
\beq\label{eJaco}
J_{e,b}(x):=(r_e + b(x))\sqrt{(r_e +b(x))^2  + |\nabla_T b(x)|^2}.
\eeq
Then it holds that
\beq\label{echange}
dS(x_{e,h}(x)) = J_{e,h}(x) dS(x), \quad x \in S^2.
\eeq

Straight-forward calculations using \eqnref{icoordi}-\eqnref{echange} show that the following relations hold:
\begin{itemize}
\item Let $A(h)$ be the operator on $L^2(S^2)$ defined by the integral kernel
\beq\label{Ah}
A_h(x,y)  =\frac{1}{4\pi}\frac{\la x_{i,h}(x)-x_{i,h}(y),\nu_{i,h}(x)  \ra}{|x_{i,h}(x)-x_{i,h}(y)|^3} J_{i,h}(x).
\eeq
Then,
\beq\label{Arel}
A(h)[f_1](x) = J_{i,h}(x) \Kcal^*_{\p D_h} [\Gvf_1](x_{i,h}(x)),
\eeq
where $f_1(y):= \Gvf_1(x_{i,h}(y)) J_{i,h}(y)$.

\item Let $B(b)$ be defined by
\beq\label{keb_ker}
B_b(x,y)=\frac{1}{4\pi}\frac{\la x_{e,b}(x)-x_{e,b}(y),\nu_{e,b}(x)  \ra}{|x_{e,b}(x)-x_{e,b}(y)|^3} J_{e,b}(x).
\eeq
Then,
\beq\label{Brel}
B(b)[f_2](x) = J_{e,b}(x) \Kcal^*_{\p \GO_b} [\Gvf_2](x_{e,b}(x)),
\eeq
where $f_2(y):= \Gvf_2(x_{e,b}(y)) J_{e,b}(y)$.

\item Let $C(h,b)$ be defined by
\beq\label{C}
C_{h,b} (x,y)=\frac{1}{4\pi}\frac{\la x_{i,h}(x) - x_{e,b}(y),  \nu_{i,h}(x)  \ra}{|x_{i,h}(x) - x_{e,b}(y)|^3} J_{i,h}(x).
\eeq
Then,
\beq\label{Crel}
C(h,b)[f_2](x) = J_{i,h}(x) \p_\nu\Scal_{\p\GO_b} [\Gvf_2](x_{i,h}(x)) .
\eeq

\item Let $D(h,b)$ be defined by
\beq\label{Dkernel}
D_{h,b} (x,y)=\frac{1}{4\pi}\frac{\la x_{e,b}(x) - x_{i,h}(y),  \nu_{e,b}(x)  \ra}{|x_{e,b}(x) - x_{i,h}(y)|^3} J_{e,b}(x).
\eeq
Then,
\beq\label{Drel}
D(h,b)[f_1](x)= J_{e,b}(x) \p_\nu\Scal_{\p D_h} [\Gvf_1](x_{e,b}(x)) .
\eeq
\end{itemize}

Thanks to above formulae, the integral equation \eqnref{pp1} now takes the form
\beq\label{system}
\begin{cases}
\ds \left( -\Gl I+ A(h) \right) [f_1] + C(h,b) [f_2]  = g_1 , \\
\ds D(h,b) [f_1] + \left( -\Gm I+ B(b) \right) [f_2]  = g_2 ,
\end{cases}
\eeq
where
\beq
g_1(x):= J_{i,h}(x) \psi_1 (x_{i,h}(x)) \quad\mbox{and}\quad g_2(x):= J_{e,b}(x) \psi_2 (x_{e,b}(x)).
\eeq
Let
\beq
\Acal(h,b) :=
\begin{bmatrix}
-\Gl I+ A(h) & C(h,b) \\
D(h,b) & -\Gm I+ B(b)
\end{bmatrix}
\eeq
and $f=(f_1,f_2)^\top$, $g=(g_1,g_2)^\top$ ($\top$ for transpose). Then \eqnref{system} can be written in short as
\beq\label{sys}
\Acal(h,b)f=g
\eeq
on $L^2_0(S^2)^2$. Moreover, \eqnref{bounded} shows that there is a constant $K > 0$ depending on $h$ and $b$ such that
$$
\| \Acal(h,b)^{-1} g \|_{2}\le K \| g \|_{2} ,
$$
where $\| \cdot \|_{2}$ denotes the norm on $L^2(S^2)^2$.  Here and throughout this paper $K$ denotes a positive constant which may differ at each appearance.

\subsection{Continuity of the integral operator}

We now consider the continuity of the operator $\Acal(h,b)$ with respect to $h$ and $b$. For that we assume that $b \in W^{2,\infty}(S^2)$. We first obtain the following proposition for the continuity. This proposition for two dimensions was obtained in \cite{KLS2D}. Even though the idea and the procedure of the proof are almost identical, the proof here and there are technically dissimilar because of the nature of the integral kernels. For example, the integral kernels $A_h(x,y)$ and $B_b(x,y)$ have singularities of order 1 at $x=y$ in three dimensions, while there is no singularity in two dimensions. See the first paragraph of the following proof.

\begin{prop}\label{prop:stab}
There is $\Gve >0$ such that if $h, b \in W^{2,\infty}(S^2)$ and $\| h \|_{2,\infty} + \| b \|_{2,\infty} \le \Gve$, then
\begin{itemize}
\item[(i)] $\Acal(h,b)$ is continuous at $(h,b)=(0,0)$ strongly, namely, there is a constant $K > 0$ such that
\beq\label{contione}
\| (\Acal(h,b)- \Acal(0,0)) [f] \|_2 \le K(\| h \|_{2,\infty} + \| b \|_{2,\infty}) \| f \|_2
\eeq
for all $f \in L^2(S^2)^2$.
\item[(ii)] $\Acal(h,b)$ is continuous at $(h,b) \neq (0,0)$ weakly, namely, for each $f \in L^2(S^2)^2$
\beq\label{contitwo}
\| (\Acal(k,d)- \Acal(h,b)) [f] \|_2 \to 0
\eeq
as $\| k-h \|_{2,\infty} + \| d-b \|_{2,\infty} \to 0$.
\end{itemize}
\end{prop}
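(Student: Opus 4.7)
I would split $\Acal(h,b)-\Acal(0,0)$ into its four blocks and handle the off-diagonal pair $C(h,b),D(h,b)$ separately from the diagonal pair $A(h),B(b)$: the former have non-singular kernels (because $\p D_h$ and $\p\GO_b$ remain uniformly separated as long as $\|h\|_\infty+\|b\|_\infty\le\Gve<r_e-r_i$), while the latter carry an $|x-y|^{-1}$ singularity coming from the Neumann--Poincar\'e kernel on the sphere.

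For $C$ and $D$ the kernels depend smoothly on $(x,y)$ and continuously on $h,b,\nabla_T h,\nabla_T b$ through smooth compositions, so a direct mean-value estimate gives
$$
|C_{h,b}(x,y)-C_{0,0}(x,y)|+|D_{h,b}(x,y)-D_{0,0}(x,y)|\le K\bigl(\|h\|_{1,\infty}+\|b\|_{1,\infty}\bigr),
$$
and since the kernels are uniformly bounded on the compact product $S^2\times S^2$ this controls the $L^2\to L^2$ operator norm immediately.

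The main work is on $A(h)-A(0)$; the argument for $B(b)-B(0)$ is identical. Using $J_{i,h}(x)\nu_{i,h}(x)=(r_i+h(x))[(r_i+h(x))x-\nabla_T h(x)]$, I would rewrite the kernel as a single ratio whose numerator $N(x,y;h)$ is polynomial in $h(x),h(y),\nabla_T h(x)$ and whose denominator is $|x_{i,h}(x)-x_{i,h}(y)|^3$. Two pointwise estimates drive everything. First, a bi-Lipschitz lower bound $|x_{i,h}(x)-x_{i,h}(y)|\ge c|x-y|$ for $\|h\|_{1,\infty}$ small, obtainable from the identity $|x_{i,h}(x)-x_{i,h}(y)|^2=(h(x)-h(y))^2+(r_i+h(x))(r_i+h(y))|x-y|^2$. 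Second, the numerator bound $|N(x,y;h)|\le K|x-y|^2$, obtained by combining $1-\la x,y\ra=|x-y|^2/2$ with the Taylor expansion $h(y)-h(x)=\nabla_T h(x)\cdot(y-x)+O(\|h\|_{2,\infty}|x-y|^2)$; the apparent linear-in-$(x-y)$ contributions from $h(x)-h(y)$ and $\la y,\nabla_T h(x)\ra$ cancel, and this cancellation is the principal technical obstacle because it is not term by term. Applying the same bookkeeping to $A_h-A_0$ extracts an extra factor $\|h\|_{2,\infty}$ from each non-cancelling piece and yields
$$
|A_h(x,y)-A_0(x,y)|\le K\,\|h\|_{2,\infty}\,|x-y|^{-1}.
$$
Since $\sup_{x\in S^2}\int_{S^2}|x-y|^{-1}\,dS(y)<\infty$, Schur's test gives $\|A(h)-A(0)\|_{L^2\to L^2}\le K\|h\|_{2,\infty}$, completing (i).

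For (ii) the clean cancellations above rely on the reference surface being exactly a sphere, so I would abandon operator-norm continuity and aim only for strong convergence of each image. The off-diagonal blocks still give operator-norm continuity by the same smooth-kernel argument, with constants now depending on $\|h\|_{2,\infty},\|b\|_{2,\infty}$. For $A(k)-A(h)$ I would fix $f\in L^2(S^2)$ and apply dominated convergence twice: first, for a.e.\ $x$, the integrand $(A_k(x,y)-A_h(x,y))f(y)\to 0$ pointwise as $k\to h$ in $W^{2,\infty}$ and is dominated by the $L^1_y$-function $2K|x-y|^{-1}|f(y)|$ uniformly in $k$ near $h$, so $(A(k)f-A(h)f)(x)\to 0$ pointwise; second, since $x\mapsto\int_{S^2}|x-y|^{-1}|f(y)|\,dS(y)$ lies in $L^2(S^2)$ by Young's inequality, another application of dominated convergence in $x$ upgrades this to $L^2$-norm convergence, completing the proof.
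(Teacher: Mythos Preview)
Your argument is correct and, for part (i), follows the same line as the paper: factor the $A$-- and $B$--kernels through the spherical distance $|x-y|$, use the Taylor expansion of $h$ (or $b$) to exhibit the cancellation of the linear-in-$(x-y)$ terms in the numerator, and end up with a pointwise bound $|A_h(x,y)-A_0(x,y)|\le K\|h\|_{2,\infty}|x-y|^{-1}$ which the Schur test converts into an operator-norm bound. The paper records this via auxiliary quantities $R_1,R_2$ and the identity $B_b(x,y)=\tfrac{1}{8\pi|x-y|}\,(1+R_2)^{-1/2}\bigl[1+\tfrac{R_1-R_2}{1+R_2}\bigr]$, but the content is identical. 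The off-diagonal blocks are handled the same way in both.

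The genuine methodological difference is in part (ii). The paper does \emph{not} rely on a direct double dominated-convergence argument; instead it splits $\int_{S^2}=\int_{|y-x|\le\Gd}+\int_{|y-x|>\Gd}$, uses DCT on the far piece (where the singularity is cut off), and controls the near piece by the Hardy--Littlewood maximal function bound $|I_\Gd(x)|\le C\Gd\,M(|f|)(x)$ together with the $L^2$-boundedness of $M$. Your route is shorter: the uniform kernel bound $|A_k(x,y)|\le K|x-y|^{-1}$ for $k$ near $h$ furnishes a single dominating function $2K|x-y|^{-1}|f(y)|$, which lies in $L^1_y$ for a.e.\ $x$ precisely because $x\mapsto\int|x-y|^{-1}|f(y)|\,dS(y)$ is in $L^2$ (Schur again---``Young'' is a slight misnomer here). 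This gives pointwise convergence of $(A(k)-A(h))f$, and a second DCT in $x$ against that same $L^2$ majorant finishes. Both arguments are valid; yours avoids the maximal function entirely at the cost of making the two DCT steps depend on each other (the $L^1_y$-integrability of the inner majorant is inherited from the $L^2$-finiteness of the outer one), while the paper's $\Gd$-splitting keeps the near-diagonal contribution quantitatively small in operator norm, which is a marginally more robust statement.
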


\begin{proof}
We first deal with the operator $B(b)$ and include the proof in detail here since the proof is more involved than that for the two-dimensional case in \cite{KLS2D} due to the singularity. The operator $A(h)$ can be treated similarly. The operators $C(h,b)$ and $D(h,b)$ can be dealt with in the same way as in two dimensions since their integral kernels do not have singularities. However, we include a brief proof since the details presented in this proof will be used in the later part of the paper.

One can easily see from \eqnref{ecoordi} and \eqnref{enormal} that
\begin{align*}
& \big\la x_{e,b}(x)-x_{e,b}(y) , J_{e,b}(x) \nu_{e,b}(x)  \big\ra \\
& = (r_e+b(x))^3 - (r_e+b(x)) (r_e+b(y)) \big[ \left( r_e + b(x) \right) x\cdot y -y\cdot \nabla _T b(x) \big].
\end{align*}
Here, we used the fact that $x \cdot \nabla _T b(x) =0$. Therefore, we have
\beq\label{expnum}
\big\la x_{e,b}(x)-x_{e,b}(y) ,J_{e,b}(x) \nu_{e,b}(x) \big\ra
=\frac{1}{2} r_e^3 |x-y|^2 \big( 1 + R_1 \big),
\eeq
where
\begin{multline}\label{Rone}
R_1=R_1(b;x,y)= \frac{2b(x)+b(y)}{r_e} + \frac{b(x)(b(x)+2b(y))}{r_e^2} + \frac{b^2(x)b(y)}{r_e^3}\\
+\frac{2(r_e+b(x))(b(x)-b(y))^2+2(r_e+b(x))(r_e+b(y)) \big[ b(x)-b(y) + y\cdot \nabla_Tb(x) \big]}{r_e^3 |x-y|^2}.
\end{multline}
Let $b$ be extended to $\Rbb^3 \setminus \{0\}$ by defining $b(x)=b(x/|x|)$. Then, since
$$
 b(x)-b(y) + y\cdot \nabla_T b(x) = -b(y) + b(x) + (y-x)\cdot \nabla b(x)
$$
for $x, y \in S^2$, we have from Taylor's theorem
\beq\label{taylor}
|b(x)-b(y) + y\cdot \nabla_T b(x)| \le K \| b \|_{2,\infty}|x-y|^2
\eeq
for some constant $K$. Thus,
\begin{align}
&\big| (r_e+b(x))(b(x)-b(y))^2+ (r_e+b(x))(r_e+b(y))[b(x)-b(y) + y\cdot \nabla_Tb(x) ] \big| \nonumber \\
&\le K \| b \|_{2,\infty} |x-y|^2.
\label{use of Taylor formula 2}
\end{align}
The constant $K$ may differ at each occurrence. We then infer
\beq\label{Rone2}
\sup_{x,y \in S^2} |R_1(b; x,y)| \le K \| b \|_{2,\infty}.
\eeq

One can also see that
$$
\left| x_{e,b}(x)-x_{e,b}(y) \right|^2
= r_e^2 |x-y|^2 + |b(x)x - b(y)y|^2 + 2 r_e (x-y)\cdot (b(x)x - b(y)y).
$$
Thus we have
\beq\label{expden}
\left| x_{e,b}(x)-x_{e,b}(y) \right|^3 = r_e^3 |x-y|^3 (1 + R_2)^{3/2},
\eeq
where
\beq\label{Rtwo}
R_2= R_2(b;x,y) = \frac{2(x-y)\cdot(b(x)x - b(y)y)}{r_e |x-y|^2 } + \frac{|b(x)x - b(y)y|^2}{r_e^2 |x-y|^2}.
\eeq
Note that
\beq\label{Rtwo2}
\sup_{x,y \in S^2} |R_2 (b;x,y)| \le K \|b \|_{1,\infty} .
\eeq

We have from \eqnref{keb_ker}, \eqnref{expnum} and \eqnref{expden} that
\beq\label{BRoneRtwo}
B_b (x,y)=\frac{1}{8\pi}\frac{1}{|x-y|}\frac{1}{\sqrt{1+ R_2 }}\left[ 1+ \frac{R_1- R_2}{1 + R_2} \right].
\eeq
The singularity $|x-y|^{-1}$ on the righthand side above is specific to three dimensions, and does not appear in two dimensions. Since
\begin{align*}
B_b (x,y) - B_0 (x,y) &=\frac{1}{8\pi}\frac{1}{|x-y|}\frac{1}{\sqrt{1+ R_2 }}\left[ 1+ \frac{R_1- R_2}{1 + R_2} - \sqrt{1+ R_2 } \right] \\
&= \frac{1}{8\pi}\frac{1}{|x-y|}\frac{1}{\sqrt{1+ R_2 }}\left[  \frac{R_1- R_2}{1 + R_2} - \frac{R_2 }{1+\sqrt{1+ R_2 } }\right] ,
\end{align*}
one can see from \eqnref{Rone2} and \eqnref{Rtwo2} that
\beq\label{differ}
|B_b (x,y) - B_0 (x,y)| \le \frac{K \|b\|_{2,\infty}}{|x-y|},
\eeq
provided that $\|b \|_{1,\infty}$ is sufficiently small (because of \eqnref{Rtwo2}). Thus we have
\beq\label{Bone}
\| (B(b)-B(0)) [f_2] \|_2 \le K \|b\|_{2,\infty} \| f_2 \|_2
\eeq
for all $f_2 \in L^2(S^2)$.

Let $f_2 \in L^2(S^2)$ and $\Gd$ be an arbitrary but fixed positive small number. For each $x \in S^2$, we write
\begin{align*}
(B(d)-B(b)) [f_2](x) &= \int_{S^2} (B_d (x,y) - B_b (x,y)) f_2(y) \, dS(y) \\
& = \int_{|y-x| \le \Gd} + \int_{|y-x| > \Gd} =: I_\Gd(x) + II_\Gd(x).
\end{align*}
If $\| d -b \|_{2, \infty} \to 0$, then $B_d (x,y) - B_b (x,y) \to 0$ unless $x =y$. Moreover, we have from \eqnref{differ}
$$
\big| (B_d (x,y) - B_b (x,y)) f_2(y) \big| \le K \frac{|f_2(y)|}{|x-y|} \le \frac{K}{\Gd} |f_2(y)| ,
$$
provided that $|x-y| > \Gd$.
Thus, by Lebesgue dominated convergence theorem, we infer that $II_\Gd(x) \to 0$ for each $x$ as $d \to b$ in $W^{2, \infty}(S^2)$. Further, we have
$$
|II_\Gd(x)| \le \frac{C}{\Gd} \| f_2 \|_2.
$$
We then apply Lebesgue dominated convergence theorem once more to infer that $\| II_\Gd \|_2 \to 0$ as $d \to b$ in $W^{2, \infty}(S^2)$.

To handle $I_\Gd(x)$, we first observe that
$$
|I_\Gd(x)| \le C \int_{|y-x| \le \Gd} \frac{|f_2(y)|}{|y-x|} dS(y) = C \sum_{j=1}^\infty \int_{2^{-j} \Gd < |y-x| \le 2^{-j+1} \Gd} \frac{|f_2(y)|}{|y-x|} dS(y).
$$
Thus we have
\begin{align*}
|I_\Gd(x)| &\le C \sum_{j=1}^\infty \frac{1}{2^{-j} \Gd} \int_{2^{-j} \Gd < |y-x| \le 2^{-j+1} \Gd} |f_2(y)| dS(y) \\
&\le C \Gd \sum_{j=1}^\infty \frac{2^{-j}}{2^{-2j} \Gd^2} \int_{|y-x| \le 2^{-j+1} \Gd} |f_2(y)| dS(y) \le C\Gd M(|f_2|)(x),
\end{align*}
where $M$ is the maximal function, namely,
$$
M(|f_2|)(x) = \sup_{r} \frac{1}{r^2} \int_{|y-x| \le r} |f_2(y)| dS(y)
$$
(up to some constant multiplication). Note that in the above the constant $C$ differs at each occurrence. It then follows that
$$
\int_{S^2} |I_\Gd(x)|^2 dS \le C\Gd^2 \int_{S^2} |M(|f_2|)(x)|^2 dS \le C\Gd^2 \|f_2 \|_2^2,
$$
where the last inequality comes from the fact that the maximal function $M$ is bounded on $L^2$, for which we refer to \cite{Stein-book-70}.

So far, we have shown that
$$
\| (B(d)-B(b)) [f_2] \|_2 \le C\Gd \|f_2 \|_2 + \| II_\Gd \|_2.
$$
Since $\lim_{d \to b} \| II_\Gd \|_2=0$,
$$
\limsup_{d \to b} \| (B(d)-B(b)) [f_2] \|_2 \le C\Gd \|f_2 \|_2 ,
$$
where $d \to b$ in $W^{2,\infty}(S^2)$. Since $\Gd$ is arbitrary, we conclude that $\| (B(d)-B(b)) [f_2] \|_2 \to 0$
for each fixed $f_2$.

Similarly, one can show that
\beq\label{Aone}
\| (A(h)-A(0)) [f_1] \|_2 \le K \|h\|_{2,\infty} \|f_1 \|_2
\eeq
for all $f_1 \in L^2(S^2)$, and
\beq\label{Atwo}
\| (A(k)-A(h)) [f_1] \|_2 \to 0
\eeq
as $k \to h$ in $W^{2,\infty}(S^2)$ for each fixed $f_1$.

To handle the operator $C(h,b)$, let
\beq\label{alpha}
\Ga(h,b;x,y):= \big\la x_{i,h}(x) - x_{e,b}(y),  J_{i,h}(x) \nu_{i,h}(x) \big\ra
\eeq
and
\beq\label{beta}
\Gb(h,b;x,y):= |x_{i,h}(x) - x_{e,b}(y)|^3
\eeq
so that
\beq\label{GaoverGb}
C_{h,b}(x,y)= \frac{\Ga(h,b;x,y)}{4\pi \Gb(h,b;x,y)}.
\eeq
One can see that
\beq\label{fisrtone}
\sup_{x,y \in S^2} | \Ga(h,b;x,y)- \Ga(k,d;x,y)| \le K(\| h-k \|_{1,\infty} + \| b-d \|_{1,\infty})
\eeq
and
\beq\label{secondone}
\sup_{x,y \in S^2} | \Gb(h,b;x,y)- \Gb(k,d;x,y)| \le K(\| h-k \|_{\infty} + \| b-d \|_{\infty}).
\eeq
In fact, it is straight-forward to derive \eqnref{fisrtone}. To show \eqnref{secondone}, we see that
$$
| \Gb(h,b;x,y)^{2/3}- \Gb(k,d;x,y)^{2/3}| \le K(\| h-k \|_{\infty} + \| b-d \|_{\infty})
$$
for all $x, y \in S^2$. Furthermore, we have
\beq\label{lower}
\Gb(h,b;x,y)= |x_{i,h}(x)-x_{e,b}(y)|^3 \ge \frac{1}{ 8} (r_e-r_i)^3,
\eeq
provided that $\| h \|_\infty$ and $\| b \|_\infty$ are sufficiently small. Thus we have \eqnref{secondone} by using a simple identity for positive numbers $a$ and $b$:
$$
a-b = \frac{(a^{2/3} - b^{2/3})(a^{4/3}+a^{2/3}b^{2/3} + b^{4/3})}{a+b}.
$$
It then follows from \eqnref{fisrtone}, \eqnref{secondone} and \eqnref{lower} that
$$
\sup_{x,y \in S^2} | C_{h,b}(x,y)- C_{k,d}(x,y)| \le K(\| h-k \|_{1,\infty} + \| b-d \|_{1,\infty}),
$$
from which we conclude that
\beq\label{Conetwo}
\| (C(h,b)-C(k,d)) [f_2] \|_2 \le K (\| h-k \|_{1,\infty} + \| b-d \|_{1,\infty}) \| f_2 \|_2
\eeq
for all $f_2 \in L^2(S^2)$.

Similarly one can show that
\beq\label{Donetwo}
\| (D(h,b)-D(k,d)) [f_1] \|_2 \le K (\| h-k \|_{1,\infty} + \| b-d \|_{1,\infty}) \| f_1 \|_2
\eeq
for all $f_1 \in L^2(S^2)$.
Now \eqnref{contione} and \eqnref{contitwo} follow, and the proof is complete.
\end{proof}

Note that $\Acal(0,0)$ is nothing but the operator appearing in \eqnref{pp}, and so it is invertible. Note also that
$$
\Acal(h,b)^{-1} = \left(I + \Acal(0,0)^{-1} \big( \Acal(h,b) - \Acal(0,0) \big) \right)^{-1} \Acal(0,0)^{-1}.
$$
Thanks to \eqnref{contione}, the operator norm of $\Acal(0,0)^{-1} ( \Acal(h,b) - \Acal(0,0))$ is small if $\|h\|_{2,\infty} + \|b\|_{2,\infty}$ is small. Thus $(I + \Acal(0,0)^{-1} ( \Acal(h,b) - \Acal(0,0) ))^{-1}$ exists. So, we have the following corollary.

\begin{cor}\label{cor}
There is $\Gve >0$ such that
$$
\| \Acal(h,b)^{-1} [g] \|_{2} \le K \| g \|_{2}
$$
for all $g \in L_0^2(S^2)^2$ for some $K > 0$ independent of $h$ and $b$ satisfying $\|h\|_{2,\infty} + \|b\|_{2,\infty} < \Gve $.
\end{cor}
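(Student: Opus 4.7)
The plan is a straightforward Neumann series perturbation argument, essentially formalising the paragraph preceding the corollary.

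First, I would observe that $\Acal(0,0)$ is precisely the operator from the system \eqnref{pp} for the concentric pair $(D_0,\GO_0)$, transported to $S^2$ by the constant rescalings $x\mapsto r_i x$ and $x\mapsto r_e x$ (which reduce to multiplication by fixed Jacobians when $h=b=0$). The unique solvability of \eqnref{pp} on $L^2_0(\p D)\times L^2_0(\p\GO)$ recalled in Section \ref{subsec:PT} then yields that $\Acal(0,0)\colon L^2_0(S^2)^2\to L^2_0(S^2)^2$ is boundedly invertible, so $C_0 := \|\Acal(0,0)^{-1}\|<\infty$. One should also verify that $\Acal(h,b)$ maps $L^2_0(S^2)^2$ into itself; this is immediate since $\Kcal^*_{\p D_h}$, $\Kcal^*_{\p\GO_b}$ preserve mean zero, and the normal derivatives of single-layer potentials of mean-zero densities on the other surface integrate to zero by the divergence theorem applied in the region enclosed by the surface where the normal derivative is taken.

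Next, I would apply Proposition \ref{prop:stab}(i): there exist $\Gve_0>0$ and $K_1>0$ such that
$$
\|(\Acal(h,b)-\Acal(0,0))[f]\|_2 \le K_1\bigl(\|h\|_{2,\infty}+\|b\|_{2,\infty}\bigr)\|f\|_2
$$
whenever $\|h\|_{2,\infty}+\|b\|_{2,\infty}\le \Gve_0$. Setting $\Gve := \min\{\Gve_0,\,(2 C_0 K_1)^{-1}\}$, the operator $T := \Acal(0,0)^{-1}\bigl(\Acal(h,b)-\Acal(0,0)\bigr)$ satisfies $\|T\|\le \tfrac12$ uniformly for all $(h,b)$ in this ball.

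Finally, the Neumann series gives that $I+T$ is invertible with $\|(I+T)^{-1}\|\le 2$, and the factorisation $\Acal(h,b) = \Acal(0,0)(I+T)$ together with
$$
\Acal(h,b)^{-1} = (I+T)^{-1}\Acal(0,0)^{-1}
$$
yields the desired bound $\|\Acal(h,b)^{-1}\|\le 2 C_0$, independent of $h$ and $b$. The nontrivial input is the strong continuity estimate of Proposition \ref{prop:stab}(i), which has already been proved; beyond that, the argument is a textbook perturbation of an invertible operator, so I anticipate no substantial obstacle.
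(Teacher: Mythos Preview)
Your proposal is correct and is exactly the argument the paper gives in the paragraph immediately preceding the corollary: invertibility of $\Acal(0,0)$ from the solvability of \eqnref{pp}, the strong continuity estimate \eqnref{contione} from Proposition~\ref{prop:stab}(i), and the Neumann series factorisation $\Acal(h,b)^{-1}=(I+T)^{-1}\Acal(0,0)^{-1}$ with $T=\Acal(0,0)^{-1}(\Acal(h,b)-\Acal(0,0))$. Your write-up merely makes the constants and the $L^2_0$-preservation explicit, which the paper leaves implicit.
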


\subsection{Differentiability of the integral operator}

We now look into differentiability of $\Acal(h,b)$ with respect to $b$ when $b$  belongs to $W_6$, namely, $b$ is of the form
\beq\label{b0}
b = \sum_{l=1}^{6} b_{l} Y_{l},
\eeq
where $Y_l$ is given by \eqnref{Yl}. For such a $b$, $\| b \|_{2,\infty}$ is equivalent to
$$
|b|_\infty := \max_{1 \le j \le 6} |b_j|.
$$
For the rest of this paper, we assume that $b$ is of the form \eqnref{b0}.

Let $\p_j$ denote the partial derivative with respect to $b_j$ ($j=1,\dots,6$). Since $\p_j b= Y_j$, $\p_j x_{e,b}(y)= Y_j(y)y$. Thus we see from the definitions \eqnref{alpha} and \eqnref{beta} that
$$
\p_j \Ga(h,b;x,y):= - \big\la Y_j(y)y,  J_{i,h}(x) \nu_{i,h}(x) \big\ra
$$
and
$$
\p_j \Gb(h,b;x,y):= -3 Y_j(y) |x_{i,h}(x) - x_{e,b}(y)| (x_{i,h}(x) - x_{e,b}(y)) \cdot y.
$$
We then see easily from \eqnref{GaoverGb} and \eqnref{lower} that
$$
\sup_{x,y \in S^2} \left| \p_j C_{h,b}(x,y) \right|  \le K
$$
for some constant $K >0$.
Moreover, if $k \in W^{2,\infty}(S^2)$ and $d= \sum_{l=1}^{6} d_{l} Y_{l}$, then
$$
\sup_{x,y \in S^2} \left| \p_j C_{h,b}(x,y) - \p_j C_{k,d}(x,y) \right|  \le K (\| h-k \|_{1,\infty} + |b-d|_\infty).
$$
Thus we see that the operator $\p_j  C(h,b)$ is bounded on $L^2(S^2)$ and
\beq\label{Cderi}
\left\| \left( \p_j  C(h,b) - \p_j  C(k,d) \right) [f_2] \right \|_2 \le K (\| h-k \|_{1,\infty} + |b-d|_\infty) \| f_2 \|_2
\eeq
for all $f_2 \in L^2(S^2)$.

Similarly one can see that the operator $\p_j  D(h,b)$ is bounded on $L^2(S^2)$ and
\beq\label{Dderi}
\left\| \left( \p_j  D(h,b) - \p_j  D(k,d) \right) [f_1] \right \|_2 \le K (\| h-k \|_{1,\infty} + |b-d|_\infty) \| f_1 \|_2
\eeq
for all $f_1 \in L^2(S^2)$.

Let $R_1$ and $R_2$ be the quantities defined by \eqnref{Rone} and \eqnref{Rtwo}, respectively, with $b$ and $d$ of the form \eqnref{b0}. We claim that the following inequalities hold for $l=1,2$ and $j,k=1,\dots,6$:
\begin{align}
\sup_{x,y \in S^2} |\p_j R_l(b,x,y)| & \le K, \label{pR11} \\
\sup_{x,y \in S^2} | R_l(b,x,y)- R_l(d,x,y)| &\le K|b-d|_\infty,  \label{pR12} \\
\sup_{x,y \in S^2} |\p_k \p_j R_l(b,x,y)| &\le K,  \label{pR13} \\
\sup_{x,y \in S^2} |\p_j R_l(b,x,y) - \p_j R_l(d,x,y)| &\le K |b-d|_\infty . \label{pR14}
\end{align}
In fact, since $\p_j b= Y_j$, we obtain from \eqnref{Rone}
\begin{align}
\p_j R_1(b,x,y) &= \frac{2Y_j(x) + Y_j(y)}{r_e} + \frac{Y_j(x)(b(x)+ 2b(y))+b(x)(Y_j(x)+2Y_j(y))}{r_e^2} \nonumber \\
& \quad + \frac{2b(x)Y_j(x) b(y)+b^2(x)Y_j(y)}{r_e^3} + \frac{2}{r_e^3|x-y|^2} I_b(x,y), \label{pR1}
\end{align}
where
\begin{align*}
I_b(x,y) &:= Y_j(x)(b(x)-b(y))^2 + 2(r_e+b(x))(b(x)- b(y))(Y_j(x)- Y_j(y)) \\
& \quad + [r_e(Y_j(x)+Y_j(y))+Y_j(x)b(y)+b(x)Y_j(y)][b(x)-b(y) + y\cdot\nabla_Tb(x) ]  \\
& \quad +(r_e+b(x))(r_e+b(y))[Y_j(x)-Y_j(y) + y\cdot\nabla_T Y_j(x))] .
\end{align*}
Using \eqnref{taylor}, we see that
$$
|I_b(x,y)| \le K |x-y|^2
$$
for some $K$. Thus we arrive at
\beq
\sup_{x,y \in S^2} |\p_j R_1(b,x,y)| \leq K , \quad j=1,\dots,6.
\eeq
We then immediately obtain \eqnref{pR11}  and hence \eqref{pR12} when $l=1$. By taking further derivatives in \eqnref{pR1}, one can also show \eqref{pR13} and \eqref{pR14}. The case when $l=2$ can be proved similarly using \eqnref{Rtwo}.

From \eqref{BRoneRtwo} we have, for $j=1,\dots,6$,
\begin{align}
\p_j B_b(x,y) &= \frac{1}{8\pi} \frac{1}{|x-y|}\Big[ -\frac{1}{2}(1+ R_2 )^{-3/2} \p_j R_2  \left( 1+ \frac{R_1- R_2}{1 + R_2} \right) \nonumber \\
& \quad + \left( 1+ R_2  \right)^{-1/2} \left( \frac{\p_j R_1 - \p_j R_2}{1+R_2} -  \frac{ (R_1 - R_2) \p_j R_2}{(1+R_2)^2} \right)\Big]. \label{p1b}
\end{align}
It then follows from \eqref{Rone2}, \eqref{Rtwo2}, and \eqnref{pR11}-\eqnref{pR14} that for $j,k=1,\dots,6$
$$
\sup_{x,y \in S^2} |\p_k\p_j B_b (x,y) | \le K,
$$
and hence
$$
\sup_{x,y \in S^2} |\p_j B_b (x,y) - \p_j B_d (x,y) | \le K|b-d|_\infty.
$$
Thus we have
\beq\label{Bderi}
\left\| \left( \p_j  B(b) - \p_j  B(d) \right) [f_2] \right \|_2 \le K |b-d|_\infty \| f_2 \|_2
\eeq
for all $f_2 \in L^2(S^2)$.

The following proposition is an immediate consequence of \eqnref{Cderi}, \eqnref{Dderi} and \eqnref{Bderi}.
\begin{prop}\label{prop:deri}
There is a constant $\Gve >0$ such that if $b$ is of the form \eqnref{b0} and $\| h\|_{1,\infty} + |b|_\infty<\Gve$, then $\p_j \Acal(h,b)$ is bounded on $L^2(S^2)^2$ for $j=1,\dots,6$. Moreover, there is $K>0$ such that if $d$ is of the form \eqnref{b0} and $\| k \|_{1,\infty} + |d|_\infty <\Gve$, then
\beq\label{Aderi}
\left\| \left( \p_j  \Acal(h,b) - \p_j  \Acal(k,d) \right) [f] \right \|_2 \le K (\| h-k \|_{1,\infty} + |b-d|_\infty) \| f \|_2
\eeq
for all $f \in L^2(S^2)^2$.
\end{prop}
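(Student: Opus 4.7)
The plan is to reduce the proposition to the block-wise estimates already collected for $\p_j B(b)$, $\p_j C(h,b)$, and $\p_j D(h,b)$, since $\Acal(h,b)$ is a $2\times 2$ operator matrix and the only blocks depending on $b$ are $B$, $C$, and $D$. First I would observe that the block $-\Gl I + A(h)$ does not depend on $b$ at all, so $\p_j(-\Gl I + A(h)) = 0$ and its contribution to both the boundedness and the Lipschitz statement is trivial. The other three blocks are exactly those treated in the estimates \eqnref{Cderi}, \eqnref{Dderi}, and \eqnref{Bderi}.

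For the boundedness claim, I would take $k = h$ and $d = b$ in each of \eqnref{Cderi}, \eqnref{Dderi}, \eqnref{Bderi} to conclude that $\p_j C(h,b)$, $\p_j D(h,b)$, and $\p_j B(b)$ are each bounded on $L^2(S^2)$, with operator norms controlled by the constants produced in the preceding derivations (which in turn rely on the kernel bounds derived from \eqnref{pR11}--\eqnref{pR14} and the lower bound \eqnref{lower}). Since the $2\times 2$ matrix operator $\p_j \Acal(h,b)$ has only these three nonzero blocks, its operator norm on $L^2(S^2)^2$ is bounded by the sum of the norms of its blocks, which gives a uniform bound depending only on $r_i$, $r_e$, $\Gl$, $\Gm$, and $\Gve$.

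For the Lipschitz estimate \eqnref{Aderi}, I would write $f = (f_1, f_2)^\top \in L^2(S^2)^2$, compute $(\p_j \Acal(h,b) - \p_j \Acal(k,d))[f]$ blockwise, and apply \eqnref{Cderi} to the $(1,2)$-block contribution $(\p_j C(h,b) - \p_j C(k,d))[f_2]$, \eqnref{Dderi} to the $(2,1)$-block contribution $(\p_j D(h,b) - \p_j D(k,d))[f_1]$, and \eqnref{Bderi} to the $(2,2)$-block contribution $(\p_j B(b) - \p_j B(d))[f_2]$. Summing the three estimates and using $\| f_1 \|_2 + \| f_2 \|_2 \le \sqrt{2}\, \| f \|_2$ yields \eqnref{Aderi} with a constant $K$ depending only on the constants appearing in \eqnref{Cderi}, \eqnref{Dderi}, \eqnref{Bderi}.

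The only subtle point, and so the main conceptual obstacle, is ensuring that the parameter dependence $b \in W_6$ (a six-dimensional subspace) is handled correctly: one must use the equivalence $\| b \|_{2,\infty} \sim |b|_\infty$ on $W_6$ (noted just after \eqnref{b0}) so that the Lipschitz control in \eqnref{Bderi}--\eqnref{Dderi}, expressed in $|b-d|_\infty$, actually matches differentiability in the finite-dimensional parameter. Once this equivalence is invoked, the differentiability and Lipschitz properties of the scalar-valued kernels $B_b$, $C_{h,b}$, $D_{h,b}$ (already proved pointwise using \eqnref{pR11}--\eqnref{pR14} and \eqnref{lower}) transfer immediately to operator-norm statements, and the proposition follows with no further work.
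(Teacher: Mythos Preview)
Your approach is essentially the same as the paper's: the proposition is stated there as ``an immediate consequence of \eqnref{Cderi}, \eqnref{Dderi} and \eqnref{Bderi},'' and you have correctly unpacked what that means, including the observation that $\p_j A(h)=0$.

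One small slip: setting $k=h$ and $d=b$ in \eqnref{Cderi}, \eqnref{Dderi}, \eqnref{Bderi} gives only $0\le 0$ and does not yield boundedness. What you actually need (and what your parenthetical already points to) is the direct kernel bound $\sup_{x,y}|\p_j C_{h,b}(x,y)|\le K$ and the analogous bound for $\p_j B_b$ coming from \eqnref{pR11}, which the paper records just before stating \eqnref{Cderi} and \eqnref{Bderi}; alternatively, fix a reference point such as $(k,d)=(0,0)$ and combine boundedness there with the Lipschitz estimate. With that correction, your argument is complete and matches the paper's.
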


\section{Some computations}\label{sec:com}

In this section we compute the quantities
\beq\label{quantities}
\left\la  x_{l'} , \p_j B(0)[x_l] \right\ra, \quad \la x_{l'}, \p_j C(0,0)[x_l] + \Gr \p_j D (0,0)[x_l] \ra,
\eeq
for $1\leq l\leq l'\leq3$ and $j=1,\dots,6$. Here $\la \ , \ \ra$ denotes the inner product on $L^2(S^2)$ and $\Gr=r_i/r_e$. These quantities appear in computation of the Jacobian determinant of the PT at $(0,0)$ in the next section.
Note that $\p_j A(h)=0$ since $A(h)$ is independent of $b$.

For computations in this section, the following three identities are useful:
\beq\label{funk1}
\int_{S^2} \frac 1{|x-y|} dS(y) = 4\pi, \quad  \int_{S^2} \frac {y_k}{|x-y|} dS(y) =  \frac {4\pi}3 x_k,
\eeq
and
\beq\label{funk2}
\int_{S^2} \frac {y_iy_k}{|x-y|} dS(y) =  \frac {16\pi}{15}\delta_{ik}+\frac {4\pi}5 x_ix_k,
\eeq
for $x \in S^2$ and for $ i, k = 1, 2, 3$, where $\Gd_{ik}$ is the Kronecker delta.

These identities can be proved using the Funk-Hecke Formula \cite[Theorem 2.22]{AtHa12Book}: for $f\in L^1(-1,1)$, $x\in S^2$ and for every homogeneous harmonic polynomial  $Y$ of degree $n$, the following formula holds
\beq\label{fk}
\int_{S^2} f(x\cdot y) Y(y) dS(y) = \Gl_n Y(x),
\eeq
where the constants $\Gl_n$ are given by
\beq\label{lambda}
\Gl_n = 2\pi \int_{-1}^{1} P_{n,3}(t) f(t) dt,
\eeq
where $P_{n,3}(t)$ are the Legendre polynomial of degree $n$ in three dimensions. Note that the constant $\Gl_n$ depends only on degree $n$. In fact, since
\beq\label{sphereid}
1-x\cdot y = \frac 12|x-y|^2,\quad x, y \in S^2,
\eeq
the relevant function $f$ for \eqnref{funk1} and \eqnref{funk2} is $f(t)= (2(1-t))^{-1/2}$. Since $P_{0,3}=1$, $P_{1,3}=t$ and $P_{2,3}=1/2(3t^2-1)$, we may apply \eqnref{fk} and \eqnref{lambda} to derive \eqnref{funk1} and \eqnref{funk2}. An additional remark may be required for the case when $i=k$ in \eqnref{funk2}. Even though $y_i^2$ is not harmonic, $y_i^2-1/3 |y|^2$ is. So we may apply \eqnref{fk} to this function and derive \eqnref{funk2} when $i=k$.

Let $U$ be the unit ball so that $\p U=S^2$. Since both sides of the equality in \eqref{funk1} are harmonic in $x\in U$, \eqref{funk1} holds for every $x \in \overline{U}$.  By the same reason,  if $i \not=k$,  \eqref{funk2} holds for every $x \in \overline{U}$.  Thus we have from \eqref{funk1} and \eqref{funk2} that for every $i, k =1, 2, 3,$ and for every  $x \in \overline{U}$
\beq\label{pre key eq 1}
\int_{S^2} \frac {x_k-y_k}{|x-y|} dS(y) = \frac{8\pi}3x_k,
\eeq
and
\beq\label{pre key eq 2}
\int_{S^2} \frac {(x_i-y_i)(x_k-y_k)}{|x-y|} dS(y) = \frac{32\pi}{15}x_ix_k \quad \mbox{if } i \not=k.
\eeq
By differentiating \eqref{pre key eq 1} in $x$, with the aid of the first equality in \eqref{funk1},  we have for every $i, k =1, 2, 3,$ and for every  $x \in \overline{U}$
\begin{equation}
\label{1st tool for the 2nd term including Gik}
\int_{S^2} \frac {(x_i-y_i)(x_k-y_k)}{|x-y|^3} dS(y) = \frac{4\pi}3\Gd_{ik}.
\end{equation}
Similarly, by differentiating \eqref{pre key eq 2} in $x$, with the aid of \eqref{pre key eq 1},  we have for every $i, k, l =1, 2, 3,$ and for every  $x \in \overline{U}$
\begin{equation}
\label{2nd tool for the 2nd term including Gik}
\int_{S^2} \frac {(x_i-y_i)(x_k-y_k)(x_l - y_l )}{|x-y|^3} dS(y) = \frac{8\pi}{15}(\Gd_{ik}x_l+\Gd_{kl}x_i+\Gd_{li}x_k),
\end{equation}
unless $i=k=l$. Even if $i=k=l$, we can recover \eqref{2nd tool for the 2nd term including Gik} by using \eqref{pre key eq 1} and $|x-y|^2 = \sum_{i=1}^3(x_i-y_i)^2$.

We now compute the first quantity in \eqnref{quantities}.
The following identities can be derived immediately from \eqnref{Rone} and \eqnref{pR1}:
\begin{align*}
R_1(0;x,y) &= 0, \\
\p_j R_1(0;x,y) &= \frac{2Y_j(x) + Y_j(y) }{r_e} + \frac{2(Y_j(x) - Y_j(y) + y\cdot\nabla_T Y_j(x))}{r_e |x-y|^2},
\end{align*}
and the following from \eqref{Rtwo} (and by taking derivatives and using \eqnref{sphereid}):
\begin{align*}
R_2(0;x,y) &=0, \\
\p_j R_2(0;x,y) & = \frac{  Y_j(x)+Y_j(y)}{r_e}.
\end{align*}
Here and afterwards, we denote $\p_j B_0(x,y):= \p_j B_b(x,y)|_{b=0}$ for $j=1,\dots,6$, just for simplicity.
We obtain from \eqnref{p1b} and the above identities that
\begin{align}\label{pjB}
\p_j B_0(x,y) &= \frac{1}{8\pi} \frac{1}{|x-y|} \left(\p_j R_1(0;x,y)  - \frac{3}{2}\p_j R_2(0;x,y) \right) \nonumber \\
&= \frac{1}{16\pi r_e} \frac{Y_j(x)- Y_j(y) }{|x-y|} + \frac{1}{4\pi r_e} \frac{Y_j(x)- Y_j(y)+ y\cdot\nabla_T Y_j(x)}{|x-y|^3} .
\end{align}

Since $Y_1$ is constant, we see from \eqref{pjB} that
$$
\p_1 B_0(x,y) = 0.
$$
For $j=2,\dots,6$, it is convenient to abuse notation and denote by $Y_j(x)$ the homogenous harmonic polynomial of order $2$ such that it is the spherical harmonic $Y_j(x)$ when $|x|=1$. If we use such notation, then by Taylor expansion we have
\beq\label{tay}
Y_j(y) = Y_j(x) + \nabla Y_j(x) \cdot (y-x) + \frac{1}{2} \sum_{i,k=1}^{3} G^j_{ik} (y_i-x_i)(y_k-x_k),
\eeq
where $G^j_{ik}= \frac{\p^2}{\p x_i\p x_k} Y_j(x)$, which are constants. Moreover, we have for $x \in S^2$
\beq\label{Tangential Gradient of harmonic polynomials}
\nabla_T Y_{j}(x)=\nabla Y_{j}(x) - (x\cdot \nabla Y_{j}(x)) x.
\eeq
Using these two identities, we obtain
\begin{align*}
Y_j(x)- Y_j(y)+y\cdot\nabla_T Y_j(x) &= Y_j(x)- Y_j(y)+y\cdot\left[\nabla Y_j(x) -(x\cdot\nabla Y_j(x)) x\right] \nonumber\\
&= (x \cdot\nabla Y_j(x))(1-x\cdot y) - \frac{1}{2} \sum_{i,k=1}^{3} G^j_{ik} (y_i-x_i)(y_k-x_k) .
\end{align*}
Recall the identity
\beq\label{Euler}
x\cdot\nabla Y_j(x) = 2Y_j(x) , \quad x \in S^2,
\eeq
which is a special case (of order 2) of Euler's theorem on homogeneous functions. Using this identity and \eqnref{sphereid}, we have
\beq\label{y2}
Y_j(x)- Y_j(y)+y\cdot\nabla_T Y_j(x) = Y_j(x)  |x-y|^2 - \frac{1}{2} \sum_{i,k=1}^{3} G^j_{ik} (y_i-x_i)(y_k-x_k).
\eeq
Plugging \eqref{y2} into \eqref{pjB}, we have
\beq\label{pbj}
\p_j B_0(x,y) = \frac{1}{8\pi r_e} \left(\frac{\frac{5}{2}Y_j(x) - \frac{1}{2}   Y_j(y)  }{|x-y|} -  \sum_{i,k=1}^{3} G^j_{ik}\frac{ (y_i-x_i)(y_k-x_k)}{|x-y|^3}\right) .
\eeq
Then,
\begin{align}
&\left\la  x_{l'} , \p_j B(0)[x_l] \right\ra  \nonumber
\\
&= \frac{1}{r_e}\int_{S^2} x_{l'}  \int_{S^2}  \left(\frac{\frac{5}{2} Y_j(x) - \frac{1}{2}  Y_j(y)  }{8\pi |x-y|} -  \sum_{i,k=1}^{3} G^j_{ik}\frac{(y_i-x_i)(y_k-x_k)}{8\pi |x-y|^3}\right) y_l dS(y) dS(x).\label{bj}
\end{align}

We now compute the right-hand side of \eqref{bj}.
The second equality in \eqref{funk1} yields
\begin{equation}
\label{first term calculation}
\int_{S^2} x_{l'}  \int_{S^2} \frac{\frac{5}{2} Y_j(x) - \frac{1}{2}  Y_j(y)  }{8\pi |x-y|} y_l dS(y) dS(x) = \frac 13\int_{S^2} x_{l'} x_lY_j(x) dS(x).
\end{equation}
Moreover, with the aid of \eqref{1st tool for the 2nd term including Gik} and \eqref{2nd tool for the 2nd term including Gik}, we compute
\begin{align}
& \sum_{i,k=1}^{3} G^j_{ik}\int_{S^2} x_{l'}  \int_{S^2} \frac{(y_i-x_i)(y_k-x_k)}{ 8\pi |x-y|^3}y_l dS(y) dS(x)\nonumber
\\
&= \sum_{i,k=1}^{3} G^j_{ik}\int_{S^2} x_{l'}  \int_{S^2}\left\{ \frac{(y_i-x_i)(y_k-x_k)(y_l-x_l)}{ 8\pi |x-y|^3} + \frac {(y_i-x_i)(y_k-x_k)x_l}{ 8\pi |x-y|^3}\right\}dS(y) dS(x) \nonumber
\\
&=\frac 2{15}\sum_{i=1}^{3} G^j_{il}\int_{S^2} x_{l'} x_i dS(x) +\frac 16\sum_{i,k=1}^{3} G^j_{ik} \Gd_{ik}\int_{S^2} x_{l'} x_l dS(x) \nonumber
\\
&=\frac 2{15}\sum_{i=1}^{3} G^j_{il}\frac{4\pi}3\Gd_{l'i}+\frac 16\sum_{i=1}^{3} G^j_{ii}\frac{4\pi}3\Gd_{l'l}  \nonumber
\\
&= \frac {8\pi}{45}G^j_{l'l}, \label{second term vanishes}
\end{align}
where the last inequality holds because $\sum_{i=1}^{3} G^j_{ii} = \GD Y_j =0$.  Then \eqref{bj} together with \eqref{first term calculation} and \eqref{second term vanishes} yields
\beq\label{general key identity 1}
\left\la  x_{l'} , \p_j B(0)[x_l] \right\ra = \frac{1}{3r_e} \int_{S^2} x_{l'} x_lY_j(x) dS - \frac{8\pi}{45r_e}G^j_{l'l}.
\eeq
Since the first term on the right-hand side above appears repeatedly, we write down the values here. Let
\beq\label{Cllj1}
C_{ll'}^j:= \int_{S^2} x_{l'} x_lY_j(x) dS.
\eeq
Then, $C_{ll'}^j$ is symmetric in $l$ and $l'$, namely, $C_{ll'}^j=C_{l'l}^j$, and for $1\leq l\leq l'\leq 3$
\beq\label{Cllj2}
C_{ll'}^j = \frac{4\pi}{15} \times
\begin{cases}
	\frac{\sqrt{15}}{3} & \mbox{if } (l,l',j)=(1,1,1), (2,2,1), (3,3,1), \\
	-\frac{1}{\sqrt{3}}  & \mbox{if } (1,1,4), (2,2,4), \\
	1 & \mbox{if } (1,1,6), (1,2,2), (1,3,5), (2,3,3), \\
	-1 & \mbox{if } (2,2,6),  \\
	\frac{2}{\sqrt{3}}  & \mbox{if } (3,3,4), \\
	0 \quad & \mbox{otherwise},
\end{cases}
\eeq
which can be seen from \eqref{Yl} and \eqref{normalize}. We then see that $\la  x_{l'} , \p_j B(0)[x_l] \ra$ is symmetric in $l$ and $l'$,
and obtain for $1\leq l\leq l'\leq 3$
\beq\label{pB list}
\left\la  x_{l'} , \p_j B(0)[x_l] \right\ra = \frac{4\pi}{45r_e} \times
\begin{cases}
	\frac{1}{\sqrt{3}}& \mbox{if } (l,l',j)=(1,1,4), (2,2,4),  \\
	-\frac{2}{\sqrt{3}} & \mbox{if } (3,3,4),  \\
	 -1 & \mbox{if } (1,1,6), (1,2,2), (2,3,3),  (1,3,5),\\
	 1 & \mbox{if } (2,2,6), \\
	0 \quad & \mbox{otherwise}.
\end{cases}
\eeq

To compute $\p_j C_{h,b}(x,y)$ at point $(h,b) = (0,0)$, we first observe that $\Ga$ and $\Gb$ given by \eqref{alpha} and \eqref{beta} take the form
$$
\Ga(0,b;x,y) = \frac{1}{2} r_i|r_ix - r_ey |^2 (1+ R_3),
$$
where
$$
R_3=R_3(b;x,y)=\frac{r_i^2 -r_e^2 - 2 r_i b(y) (x\cdot y) }{|r_ix - r_ey |^2},
$$
and
$$
\Gb(0,b;x,y) = |r_ix - r_ey |^{3}(1+ R_4)^{3/2},
$$
where
$$
R_4=R_4(b;x,y)=\frac{ b(y)(b(y) + 2r_e- 2r_i x \cdot  y) }{ |r_ix - r_ey |^{2}}.
$$
It then follows from \eqnref{GaoverGb} that
\beq\label{c0}
C_{0,b}(x,y)= \frac{r_i}{8\pi}  \frac{1}{|r_ix-r_e y|}   \frac{1+ R_3 }{(1 + R_4)^{3/2}}  ,
\eeq
and hence
\begin{align*}
& \p_j C_{0,b}(x,y) =\frac{r_i}{8\pi}  \frac{1}{|r_ix-r_e y|}  \left[\frac{\p_j R_3 }{(1+R_4)^{3/2}} - \frac{3}{2} (1+R_4)^{1/2} \frac{ (1+R_3 )  \p_j R_4}{[1+R_4]^3}\right].
\end{align*}
Since $R_4(0;x,y) =0$ if $b=0$, we have
\beq\label{dc}
\p_j C_{0,0}(x,y) =\frac{r_i}{8\pi}  \frac{1}{|r_ix-r_e y|} \left[ \p_j R_3(0;x,y)  - \frac{3}{2}   (1+R_3(0;x,y) )  \p_j R_4(0;x,y) \right].
\eeq
Straightforward computations yield the following:
$$
R_3(0;x,y) =\frac{r_i^2 -r_e^2 }{|r_ix - r_ey |^{2}}, \quad R_4(0;x,y) =0,
$$
and for $j=1,\dots, 6$
$$
\p_j R_3(0;x,y) = \frac{- 2 r_i  x\cdot y}{|r_ix - r_ey |^{2}} Y_j(y), \quad
\p_j R_4(0;x,y) = \frac{2r_e -2r_i x\cdot y}{|r_ix - r_ey |^{2}}Y_j(y) .
$$
Plugging these terms into \eqref{dc} we have
\begin{align}\label{dc0}
\p_j C_{0,0}(x,y) &=  \frac{r_i}{8\pi} \left[ \frac{-3r_e + r_i  x\cdot y}{|r_ix - r_ey |^3} - \frac{3(r_i^2-r_e^2)(r_e - r_i  x\cdot y)}{|r_ix - r_ey |^5} \right] Y_j(y) ,\quad j=1,\dots, 6.
\end{align}
Since $Y_1$ is constant, this can be written as
\begin{align}\label{dc1}
\p_j C_{0,0}(x,y) &= \frac{\p_1 C_{0,0}(x,y)}{Y_1}  Y_j(y) ,\quad j=1,\dots, 6.
\end{align}

To compute $\p_j D_{h,b}(x,y)$ at $(h,b) = (0,0)$, set
$$
\xi(h,b;x,y):= \big\la x_{e,b}(x) - x_{i,h}(y),  J_{e,b}(x) \nu_{e,b}(x)  \big\ra,
$$
and
$$
\zeta(h,b;x,y):= |x_{e,b}(x) - x_{i,h}(y)|^3.
$$
Then we have
\begin{equation*}
\begin{split}
\xi(0,b;x,y) &= \frac{1}{2}r_e |r_ix - r_ey |^2 (1+ R_5),
\end{split}
\end{equation*}
where
\begin{align*}
R_5=R_5(b;x,y)
&=\frac{r_e(r_e^2 - r_i^2) +2 b ( r_e^2 + (2r_e+ b)(r_e+b - r_i  x\cdot y)) + 2r_i(r_e + b)y\cdot\nabla_T b}{r_e |r_ix -r_ey |^2}  .
\end{align*}
We also have
$$
\zeta(0,b;x,y) = |r_ix - r_ey |^3 (1+R_6)^{3/2},
$$
where
$$
R_6 =R_6(b;x, y)=\frac{b (b + 2r_e -2r_i x\cdot y)}{|r_ix - r_ey |^2}.
$$
Then
\beq\label{d0}
D_{0,b}(x,y)= \frac{\xi(0,b;x,y)}{4\pi \zeta(0,b;x,y)} = \frac{r_e}{8\pi} \frac{1}{|r_ix-r_ey|}  \frac{1+R_5}{(1 + R_6)^{3/2}} ,
\eeq
and hence
\begin{align*}
&\p_j D_{0,b}(x,y) = \frac{r_e}{8 \pi} \frac{1}{|r_i x- r_ey|} \left[\frac{\p_j R_5 }{(1+R_6)^{3/2}} - \frac{3}{2} (1+R_6)^{1/2} \frac{ (1+R_5 )  \p_j R_6}{(1+R_6)^3}\right].
\end{align*}
Since $R_6(0;x,y) =0$ if $b=0$, we have
\beq\label{dd}
\p_j D_{0,0}(x,y) = \frac{r_e}{8 \pi} \frac{1}{|r_i x- r_ey|} \left[ \p_j R_5(0;x,y)  - \frac{3}{2}   (1+R_5(0;x,y) )  \p_j R_6(0;x,y) \right].
\eeq
Straightforward computations yield the following:
$$
R_5(0;x,y) =\frac{r_e^2 -r_i^2 }{|r_ix - r_ey |^{2}}, \quad
R_6(0;x,y) =0,
$$
and
\begin{align*}
\p_j R_5(0;x,y) &= \frac{2 (3r_e-2r_i  x\cdot y)Y_j(x) + 2r_i  y \cdot \nabla_T Y_j(x) }{|r_ix - r_ey |^{2}} , \\
\p_j R_6(0;x,y) &= \frac{2r_e -2r_i x\cdot y}{|r_ix - r_ey |^{2}}Y_j(x) .
\end{align*}
Plugging these terms into \eqref{dd} we have
$$
\p_j D_{0,0}(x,y) =  \frac{r_e}{8\pi} \frac{3r_e - r_i  x\cdot y}{|r_ix - r_ey |^3} Y_j(x) + \frac{r_e}{8\pi} \frac{2r_i y\cdot  \nabla_T Y_j(x) }{|r_ix - r_ey |^3}  -  \frac{r_e}{8\pi} \frac{3(r_e^2-r_i^2)(r_e - r_i  x\cdot y)}{|r_ix - r_ey |^5} Y_j(x).
$$
Thanks to \eqref{dc0}, this formula can be rephrased as
\beq\label{CD}
\p_j D_{0,0}(x,y) = - \frac{\p_1 C_{0,0}(x,y)}{\Gr Y_1} Y_j(x) + E_j(x,y), \quad j=1,\dots,6,
\eeq
where
\beq\label{Ej}
E_j(x,y) =  \frac{r_e}{4\pi} \frac{r_i y \cdot  \nabla_T Y_j(x) }{|r_ix - r_ey |^3} .
\eeq

We now compute $\la x_{l'}, \p_j C(0,0)[x_l] + \Gr \p_j D (0,0)[x_l] \ra$. Thanks to \eqref{dc1} and \eqref{CD}, we have
\begin{align*}
& \la x_{l'}, \p_j C(0,0)[x_l] + \Gr \p_j D (0,0)[x_l] \ra \\
& = \int_{S^2} x_{l'} \int_{S^2} \left( \frac{\p_1 C_{0,0}(x,y)}{Y_1} (Y_j(y) - Y_j(x)) + \Gr E_j(x,y) \right)y_l dS(y) dS(x).
\end{align*}
Since $\p_1 C_{0,0}(x,y) $ is a function of variable $x\cdot y$, we may apply Funk-Hecke formula \eqref{fk} to see that
\begin{align*}
&\int_{S^2} x_{l'} \int_{S^2} \frac{\p_1 C_{0,0}(x,y)}{Y_1} (Y_j(y) - Y_j(x)) y_l dS(y) dS(x) \\
& = \frac{1}{Y_1} \int_{S^2} \int_{S^2}  x_{l'} \p_1 C_{0,0}(x,y) Y_j(y)  y_l dS(y) dS(x) - \frac{1}{Y_1}  \int_{S^2} \int_{S^2} y_l   \p_1 C_{0,0}(x,y)  Y_j(x)  x_{l'} dS(x) dS(y)\\
& = 0.
\end{align*}
Thus we have
\beq\label{pE}
\la x_{l'}, \p_j C(0,0)[x_l] + \Gr \p_j D (0,0)[x_l] \ra =  \Gr \left\la  x_{l'} , E_j[x_l] \right\ra,
\eeq
where
\beq
E_j[ x_l](x) := \int_{S^2} E_j(x,y)   y_l \, dS(y).
\eeq

We now compute the term $\left\la  x_{l'} , E_j[x_l] \right\ra$. Clearly $E_1[ x_l] = 0$. For $j=2,\dots,6$ and for $t=1,2,3$, by \eqref{tay} we have
$$
\frac {\p Y_j}{\p y_t}(y) = \frac {\p Y_j}{\p y_t}(x) + \sum_{k=1}^3G^j_{tk}(y_k-x_k).
$$
Then  it follows from Euler's theorem \eqnref{Euler} that
\beq\label{Euler 2}
y\cdot\nabla Y_j(x) = 2Y_j(y) - \sum_{t,k=1}^3G_{tk}^j(y_k-x_k)y_t.
\eeq
Hence, by \eqref{Tangential Gradient of harmonic polynomials} and \eqref{Ej}, we have for $j=2,\dots,6$
\beq\label{useful representation of Ej}
E_j(x,y) = \frac {r_er_i\left[ 2Y_j(y)-2Y_j(x)x\cdot y- \sum_{t,k=1}^3G_{tk}^j(y_k-x_k)y_t \right]}{4\pi |r_ix-r_ey|^3}.
\eeq
Therefore we have
\begin{align}
&\left\la  x_{l'} , E_j[x_l] \right\ra =\int_{S^2} x_{l'} \int_{S^2} E_j(x,y)  y_l \, dS(y) dS(x) =\nonumber
\\
& \frac {r_er_i }{4\pi}\!\!\int_{S^2} \int_{S^2}\!\!\left\{\frac{2x_{l'}Y_j(y)y_l}{ |r_ix-r_ey|^3}-\frac{2x_{l'}Y_j(x)y_lx\cdot y}{ |r_ix-r_ey|^3}- \!\!\!\sum_{t,k=1}^3G_{tk}^j\frac{x_{l'}y_ky_ty_l-x_{l'}x_ky_ty_l}{ |r_ix-r_ey|^3}\right\}\!\, dS(y) dS(x). \label{pre calculation for the double integral of E_j}
\end{align}

Before calculating \eqref{pre calculation for the double integral of E_j}, we prepare several integral formulas. Since formula  \eqref{funk1} holds for every $x \in \overline{U}$, we infer that for  every $x \in \overline{U}$ and every $k=1,2,3,$
\beq\label{the 1st integral formula we start with}
 \int_{S^2} \frac 1{|r_ix-r_ey|} dS(y) = \frac{4\pi}{r_e},  \quad \int_{S^2} \frac {y_k}{|r_ix-r_ey|} dS(y) =  \frac {4\pi r_i}{3r_e^2} x_k.
\eeq
Moreover, by the Funk-Hecke formula, we have for every $x, y \in S^2$ and for $k=1,2,3$
\beq\label{the 2nd integral formula 1}
\int_{S^2} \frac 1{|r_ix-r_ey|^3} dS(y) = \frac {4\pi}{r_e(r_e^2-r_i^2)},
\eeq
and
\beq\label{the 2nd integral formula 2}
\int_{S^2} \frac {y_k}{|r_ix-r_ey|^3} dS(y) = \frac {4\pi r_i}{r_e^2(r_e^2-r_i^2)}x_k, \quad
\int_{S^2} \frac {x_k}{|r_ix-r_ey|^3} dS(x) = -\frac {4\pi r_e}{r_i^2(r_e^2-r_i^2)}y_k.
\eeq
Here we used the fact that the denominator of the integrands never vanish. The first equality in \eqref{the 2nd integral formula 2} can be also obtained by differentiating the first equality in \eqref{the 1st integral formula we start with} with respect to $x$, with the aid of  \eqref{the 2nd integral formula 1}. Moreover, by differentiating the second equality in \eqref{the 1st integral formula we start with} with respect to $x$, with the aid of the first equality of \eqref{the 2nd integral formula 2},  we have for $k, t =1, 2, 3,$ and for every  $x \in \overline{U}$
\beq\label{the 3rd integral formula}
\int_{S^2} \frac {y_ky_t}{|r_ix-r_ey|^3} dS(y) = \frac {4\pi}{3r_e^3}\Gd_{kt} +\frac {4\pi r_i^2}{r_e^3(r_e^2-r_i^2)}x_kx_t.
\eeq

Using \eqref{the 2nd integral formula 2} and \eqref{the 3rd integral formula}, we have
\begin{align*}
\int_{S^2} \int_{S^2}\frac{2x_{l'}Y_j(y)y_l}{ |r_ix-r_ey|^3}\, dS(y) dS(x) &= -\frac {8\pi r_e}{r_i^2(r_e^2-r_i^2)}\int_{S^2} y_{l'}y_lY_j(y) dS(y), \\
\int_{S^2} \int_{S^2}\frac{2x_{l'}Y_j(x)y_lx\cdot y}{ |r_ix-r_ey|^3}\, dS(y) dS(x) &= \frac {8\pi(r_e^2+2r_i^2)}{3r_e^3(r_e^2-r_i^2)}\int_{S^2} x_{l'}x_lY_j(x) dS(x),
\end{align*}
and
\begin{align*}
&\int_{S^2} \int_{S^2} \sum_{t,k=1}^3G_{tk}^j\frac{x_{l'}y_ky_ty_l-x_{l'}x_ky_ty_l}{ |r_ix-r_ey|^3}\, dS(y) dS(x) \\
&= -\frac{8\pi(r_e^4+r_i^4)}{r_e^3r_i^2(r_e^2-r_i^2)}\int_{S^2} x_{l'}x_lY_j(x) dS(x) - \frac{16\pi^2}{9r_e^3}G^j_{l'l},
\end{align*}
where we used the fact that $\sum_{t,k=1}^3 G_{tk}^j x_t x_k= 2Y_j(x)$. Plugging these identities into \eqref{pre calculation for the double integral of E_j} yields
\beq\label{general key identity 2 for Ej}
\left\la  x_{l'} , E_j[x_l] \right\ra = -\frac {2r_i}{3r_e^2} \int_{S^2} x_{l'}x_lY_j(x) dS(x) + \frac {4\pi r_i}{9r_e^2}G^j_{l'l}.
\eeq
Thus we see from \eqref{pE} that $\la x_{l'}, \p_j C(0,0)[x_l] + \Gr \p_j D (0,0)[x_l] \ra$ is symmetric in $l,l'$ and for $1\leq l\leq l'\leq 3$:
\begin{align}
&\left\la x_{l'}, \p_j C(0,0)[x_l] + \Gr \p_j D (0,0)[x_l] \right\ra = \Gr \left\la  x_{l'} , E_j[x_l] \right\ra \nonumber\\
&= \frac{4\pi \Gr^2}{15r_e} \times
\begin{cases}
	-\frac{1}{\sqrt{3}}& \mbox{if } (l,l',j)=(1,1,4), (2,2,4),  \\
	\frac{2}{\sqrt{3}} & \mbox{if } (3,3,4),  \\
	1 & \mbox{if } (1,1,6), (1,2,2), (2,3,3), (1,3,5),\\
	- 1 & \mbox{if } (2,2,6), \\
	0 \quad & \mbox{otherwise}.
\end{cases}\label{CD list final}
\end{align}

\section{Proof of Theorem \ref{main_thm}}\label{sec:proof}

In this section we prove Theorem \ref{main_thm} by showing that $m_{ll'}$ satisfies the hypothesis of the implicit function theorem: continuity in $(h,b)$, continuous differentiability in $b$, and \eqnref{bJacob}. Here we recall the implicit function theorem in the following form \cite{KP}:
\begin{thm}\label{thm:ift}
Let $X$ be a Banach space. Let $U\times V$ be an open subset of $X\times \Rbb^6$. Suppose that
$$
F=(F_1,\dots,F_6): (x,y)\in U\times V\mapsto  \Rbb^6
$$
is continuous and has the property that the derivative of $F$ with respect to $y$ exists and is continuous at each point of $U\times V$. Further assume that at point $(x_0,y_0)\in U\times V$,
$$
F(x_0,y_0)=0 \quad \mbox{and} \quad   \frac{\p F}{\p y}(x_0,y_0) \neq 0.
$$
Then there exist neighborhood $N_1\subset U$ of $x_0$ and neighborhood $N_2\subset V$ of $y_0$ such that, for each $x$ in $N_1$, there is a unique $y\in N_2$ satisfying
$$
F(x,y)=0.
$$
The function $\hat y$, thereby uniquely defined near $x_0$ by the condition $\hat y(x)=y$, is continuous.
\end{thm}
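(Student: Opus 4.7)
The plan is to prove Theorem \ref{thm:ift} via Banach's fixed point theorem applied to a Newton-style reformulation. The hypothesis $\frac{\p F}{\p y}(x_0, y_0) \neq 0$ is read as the nonvanishing of the $6\times 6$ Jacobian determinant, so $A := \frac{\p F}{\p y}(x_0, y_0)$ is invertible. Define
\[
T: U \times V \to \Rbb^6, \qquad T(x,y) := y - A^{-1} F(x,y).
\]
Then $F(x,y)=0$ if and only if $y$ is a fixed point of $T(x,\cdot)$, and $T(x_0,y_0)=y_0$. The goal is to exhibit, for each $x$ near $x_0$, a closed ball around $y_0$ on which $T(x,\cdot)$ is a contraction that maps into itself, and then invoke Banach's theorem to produce $\hat y(x)$.

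First I would establish the contraction property. A direct computation gives
\[
\tfrac{\p T}{\p y}(x,y) = I - A^{-1}\tfrac{\p F}{\p y}(x,y),
\]
which vanishes at $(x_0,y_0)$. By the assumed continuity of $\frac{\p F}{\p y}$, there exist $r>0$ and an open neighborhood $U_1\subset U$ of $x_0$ such that $\big\|\tfrac{\p T}{\p y}(x,y)\big\|_{\mathrm{op}}\le \tfrac{1}{2}$ for all $x\in U_1$ and $\|y-y_0\|\le r$, whence the mean value inequality in $\Rbb^6$ gives
\[
\|T(x,y') - T(x,y)\| \le \tfrac{1}{2}\|y'-y\|, \qquad x\in U_1,\ y,y'\in \overline{B(y_0,r)}.
\]
For the self-map property, continuity of $F$ together with $F(x_0,y_0)=0$ lets me shrink $U_1$ to a smaller neighborhood $N_1$ of $x_0$ on which $\|A^{-1} F(x,y_0)\| \le r/2$. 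For $x\in N_1$ and $\|y-y_0\|\le r$ I then split
\[
\|T(x,y) - y_0\| \le \|T(x,y) - T(x,y_0)\| + \|A^{-1}F(x,y_0)\| \le \tfrac{1}{2}\|y-y_0\| + \tfrac{r}{2} \le r,
\]
so $T(x,\cdot)$ sends the complete metric space $\overline{B(y_0,r)}$ into itself. Banach's fixed point theorem, applied for each $x\in N_1$, produces a unique fixed point $\hat y(x)\in \overline{B(y_0,r)}$ solving $F(x,\hat y(x))=0$; setting $N_2:=B(y_0,r)\subset V$ yields the uniqueness clause.

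Finally I would check continuity of $\hat y$ on $N_1$. For $x,x'\in N_1$, the fixed-point identity and the contraction estimate give
\[
\|\hat y(x) - \hat y(x')\| \le \tfrac{1}{2}\|\hat y(x) - \hat y(x')\| + \|A^{-1}\|\cdot\|F(x,\hat y(x')) - F(x',\hat y(x'))\|,
\]
so $\|\hat y(x) - \hat y(x')\| \le 2\|A^{-1}\|\cdot\|F(x,\hat y(x'))-F(x',\hat y(x'))\|$, and the right-hand side tends to $0$ as $x\to x'$ in $X$ by continuity of $F$. The main obstacle, or rather the only point demanding care, is the self-map condition: it must couple the Banach-space variable $x$ (controlled only through continuity of $F$) with the finite-dimensional $y$ (controlled through the derivative estimate), and the displayed splitting of $T(x,y)-y_0$ into a contraction contribution plus the residual $A^{-1}F(x,y_0)$ is engineered precisely to decouple these two sources.
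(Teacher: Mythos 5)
The paper does not give a proof of this statement; it is the implicit function theorem in the form taken verbatim from Krantz and Parks \cite{KP}, so there is no ``paper's own proof'' to compare against. Your contraction-mapping argument, via the Newton reformulation $T(x,y)=y-A^{-1}F(x,y)$, is the standard textbook proof and is essentially correct. You also read the hypothesis $\partial F/\partial y(x_0,y_0)\neq 0$ in the intended way (nonvanishing Jacobian determinant, i.e.\ invertibility), which is consistent with how the paper applies it in \eqref{bJacob}.

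One small technical point deserves tightening. Your chain of estimates gives only $\|\hat y(x)-y_0\|\le r$, so the fixed point is guaranteed to lie in the closed ball $\overline{B(y_0,r)}$ but not necessarily in the open set $N_2:=B(y_0,r)$, and the theorem asserts existence and uniqueness in an open neighborhood $N_2$. This is easily repaired: since $x\mapsto \|A^{-1}F(x,y_0)\|$ is continuous and vanishes at $x_0$, shrink $N_1$ so that $\|A^{-1}F(x,y_0)\|< r/2$ strictly for $x\in N_1$; then the same chain yields $\|\hat y(x)-y_0\|<r$, placing $\hat y(x)$ in the open ball. With that minor adjustment, the existence, uniqueness, and continuity of $\hat y$ all go through as you wrote them, and the argument is complete.
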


Let $m_{ll'}(h,b):=m_{ll'}(D_h, \GO_b)$ as before. By definition \eqref{PT0}, $m_{ll'}(h,b)$, $l, l'=1,2,3$, are given by
$$
m_{ll'}(h,b)=\int_{\p D_h}x_{l'}\Gvf_1^{(l)} \, dS+\int_{\p \GO_b}x_{l'} \Gvf_2^{(l)} \, dS,
$$
where $\Gvf^{(l)}= (\Gvf_1^{(l)},\Gvf_2^{(l)})\in L^2_0(\p D_h)\times  L^2_0(\p \GO_b)$ is the unique solution to \eqnref{pp}.
Using changes of variables \eqnref{icoordi} and \eqnref{ecoordi}, we see that
\beq\label{m11pre}
m_{ll'}(h,b) =\int_{S^2} (r_i+ h(x))x_{l'}\, f_{h,b,1}^{(l)}(x) \, dS + \int_{S^2} (r_e+ b(x))x_{l'} \, f_{h,b,2}^{(l)}(x) \, dS,
\eeq
where
$$
f_{h,b,1}^{(l)}(x):= \Gvf^{(l)}_1(x_{i,h}(x)) J_{i,h}(x), \quad f_{h,b,2}^{(l)}(x):= \Gvf_2^{(l)}(x_{e,b}(x)) J_{e,b}(x).
$$
Let $f^{(l)}_{h,b}= (f^{(l)}_{h,b,1}, f^{(l)}_{h,b,2})^\top$ and
\beq\label{p}
p(h,b):= (r_i+ h(x), r_e+ b(x))^\top,
\eeq
where $\top$ denotes the transpose. Then, we have
\beq\label{m11def}
m_{ll'}(h,b) = \left\la x_{l'} p(h,b), f_{h,b}^{(l)} \right\ra.
\eeq

Note that $f^{(l)}_{h,b}$ is the solution of
\beq\label{inteqn}
\Acal (h,b) [f^{(l)}_{h,b}] = g^{(l)}_{h,b}:= -
\begin{bmatrix} \nu^{(l)}_{\p D_h}(x_{i,h}(x)) J_{i,h}(x) \\ \nu_{\p\GO_b}^{(l)}(x_{e,b}(x)) J_{e,b}(x) \end{bmatrix}.
\eeq
We see from \eqnref{inormal} and \eqnref{enormal} that $g^{(l)}_{h,b}$, $l=1,2,3$, is given by
\beq\label{gl}
g^{(l)}_{h,b} = - \begin{bmatrix} (r_i + h(x))[(r_i+h(x))x_l - \nabla_T h(x)_l] \\
(r_e + b(x))[(r_e+b(x))x_l - \nabla_T b(x)_l] \end{bmatrix}.
\eeq

In what follows, we show that the mapping $F:= (m_{11}, m_{12}, m_{13},m_{22}, m_{23}, m_{33})$ satisfies hypothesis of Theorem \ref{thm:ift}.

\medskip
\noindent{\bf Continuity in $(h,b)$}. We only prove continuity of $m_{11}$ since the others can be handled in the same way.

Suppose $k \in W^{2,\infty}(S^2)$ and $d \in W_6$. Then we have
$$
\Acal (k,d) [f^{(1)}_{k,d}] = g^{(1)}_{k,d}.
$$
Thus,
$$
\Acal (k,d) [f^{(1)}_{k,d} - f^{(1)}_{h,b}] = - (\Acal (k,d) - \Acal (h,b))[f^{(1)}_{h,b}] + ( g^{(1)}_{k,d} - g^{(1)}_{h,b}) .
$$
We then infer using Corollary \ref{cor} that
$$
\left\| f^{(1)}_{k,d} - f^{(1)}_{h,b} \right\|_2 \le K \left( \left\| (\Acal (k,d) - \Acal (h,b))[f^{(1)}_{h,b}] \right\|_2 + \left\| g^{(1)}_{k,d} - g^{(1)}_{h,b} \right\|_2 \right)
$$
for some constant $K$ independent of $(k,d)$ as long as $\| k \|_{2,\infty}$ and $| d|_\infty$ are sufficiently small. We then infer from \eqnref{contitwo} that
$$
\left\| (\Acal (k,d) - \Acal (h,b))[f^{(1)}_{h,b}] \right\|_2 \to 0
$$
as $\| k-h \|_{2,\infty} + |d-b|_\infty \to 0$. It is obvious from \eqnref{gl} that $\| g^{(1)}_{k,d} - g^{(1)}_{h,b} \|_2 \to 0$. Thus we have $\| f^{(1)}_{k,d} - f^{(1)}_{h,b} \|_2 \to 0$. We then conclude using \eqnref{m11def} that $m_{11}(k,d)-m_{11}(h,b) \to 0$ as $\| k-h \|_{2,\infty} + |d-b|_\infty \to 0$.

\medskip
\noindent{\bf Continuous differentiability in $b$}. By differentiating \eqnref{inteqn} with respect to the $b_j$-variable, we have
$$
\Acal (h,b) [\p_j f^{(1)}_{h,b}] = \p_j g^{(1)}_{h,b}- \p_j \Acal (h,b) [f^{(1)}_{h,b}],
$$
namely,
\beq\label{derieqn}
\p_j f^{(1)}_{h,b} = \Acal (h,b)^{-1} \left[ \p_j g^{(1)}_{h,b} - \p_j \Acal (h,b) [f^{(1)}_{h,b}] \right].
\eeq
We mention that this argument is formal since we take the derivative of $f^{(1)}_{h,b}$ without proving its existence. However, this formal argument can be justified easily.

It is clear from \eqnref{gl} that $\p_j g^{(1)}_{h,b}$ is continuous in $(h,b)$. Then Corollary \ref{cor}, Proposition \ref{prop:deri} and continuity of $f^{(1)}_{h,b}$ in $(h,b)$ imply that $\p_j f^{(1)}_{h,b}$ is continuous in $(h,b)$. We then obtain from \eqnref{m11def} that
$$
\p_j m_{11}(h,b) = \left\la x_1 \p_j p(h,b), f_{h,b}^{(1)} \right\ra + \left\la x_1 p(h,b), \p_j f_{h,b}^{(1)} \right\ra,
$$
which shows that $\p_j m_{11}(h,b)$ is continuous in $(h,b)$.

\medskip
\noindent{\bf Proof of \eqnref{bJacob}}. For ease of notation we put
$$
\psi_l(x):= x_l, \quad l=1,2,3.
$$
Then derivatives of $m_{ll'}$ takes the following form
\begin{align}
\p_j m_{ll'}(0,0) &= \left\la \psi_{l'} \p_j p(0,0), f_{0,0}^{(l)} \right\ra + \left\la \psi_{l'} p(0,0), \p_j f_{0,0}^{(l)} \right\ra. \label{mderi}
\end{align}

To compute terms on the right-hand side above, we first show that $\Acal (0,0)$ preserves the space spanned by $\psi_l (1,0)^\top$ and $\psi_l (0,1)^\top$, $l=1,2,3$, and $\Acal (0,0)^{-1}$ on that space is given by
\beq\label{Ainv}
\Acal(0,0)^{-1} \begin{bmatrix} a \psi_l\\ b \psi_l \end{bmatrix}
= \Gg_1  \begin{bmatrix}
	(-\mu +\frac{1}{6}) &  \frac{1}{3} \rho^2 \\
    - \frac{2}{3}\rho & \rho^3 (\mu +\frac{1}{6}) \end{bmatrix} \begin{bmatrix} a \psi_l\\ b \psi_l \end{bmatrix},
\eeq
where $\mu$ is the number defined in \eqnref{lambdamu} and
\beq
\Gg_1 = \dfrac{1}{\Gr^3 (1/2+\mu)(1/2-\mu)}.
\eeq
To do so, we need to compute $A(0)[\psi_l]$, $B(0)[\psi_l]$, $C(0,0)[\psi_l]$ and $D(0,0)[\psi_l]$.

We see from \eqnref{Ah}, \eqnref{keb_ker} and \eqnref{sphereid} that
$$
A_0(x,y) = B_0 (x,y) =\frac{1}{4\pi}\frac{\la x -y, x  \ra}{|x -y|^3} =\frac{1}{8\pi}\frac{1}{|x -y|}, \quad x,y\in S^2.
$$
Thus it follows from the second identity in \eqnref{funk1} that
\beq
A(0)[\psi_l] = B(0)[\psi_l] = \frac{1}{6} \psi_l.
\eeq
We see from \eqref{C} and \eqref{Dkernel} that
$$
C_{0,0}(x,y)  = \frac{1}{4\pi}\frac{\la r_i x-r_ey, x\ra}{|r_ix-r_e y|^3}r_i^2 \quad\mbox{and}\quad
D_{0,0}(x,y)  = \frac{1}{4\pi}\frac{\la r_e x-r_iy, x\ra}{|r_ex-r_i y|^3}r_e^2.
$$
Thus, \eqnref{the 2nd integral formula 1} and \eqnref{the 2nd integral formula 2} yield
\beq
C(0,0)[\psi_l]  = -\frac{\rho^2}{3} \psi_l \quad\mbox{and}\quad
D(0,0)[\psi_l] =  \frac{2\rho}{3} \psi_l.
\eeq
Thus,
\begin{equation*}
\Acal(0,0)\begin{bmatrix}
a \psi_l\\
b \psi_l
\end{bmatrix} =\begin{bmatrix}
-\Gl I +A(0)  &  C(0,0)  \\
D(0,0)&   -\Gm I+B(0)
\end{bmatrix}\begin{bmatrix}
a \psi_l\\
b \psi_l
\end{bmatrix}=
\begin{bmatrix}
-\Gl + 1/6  &  -\rho^2/3  \\
2\rho/3 &   -\Gm+1/6
\end{bmatrix}\begin{bmatrix}
a \psi_l\\
b \psi_l
\end{bmatrix}.
\end{equation*}
The desired formula \eqnref{Ainv} now follows thanks to the relation $\Gl=\frac{1}{6}-\rho^3(\mu+\frac{1}{6})$, which comes from \eqref{neutral}(the neutrality condition) and \eqref{lambdamu} (definitions of $\Gl$ and $\Gm$).

We now compute the first term on the right-hand side of \eqref{mderi}. Since $g^{(l)}_{0,0} = - \psi_l (r_i^2 , r_e^2)^\top$,
we have
\beq\label{f0}
f^{(l)}_{0,0} =  \Acal(0,0)^{-1}[g^{(l)}_{0,0}] = \psi_l V_1,
\eeq
where the constant vector $V_1$ is defined by
\beq\label{Vone}
V_1 := \Gg_2 r_e^2 \begin{bmatrix} - 1  \\ \Gr \end{bmatrix} \quad\mbox{with}\quad
\Gg_2 = \dfrac{1}{\Gr (1/2+\mu)} = \Gr^2 (1/2-u) \Gg_1.
\eeq
By \eqref{p}, $\p_j p(0,0)= (0, Y_j)^\top$, and hence $\p_j p(0,0) \cdot V_1= \Gg_2 r_e^2 \Gr Y_j$. Therefore,
\beq\label{1term}
\left\la \psi_{l'} \p_j p(0,0), f_{0,0}^{(l)} \right\ra = \Gg_2 r_e^2 \Gr C_{ll'}^j,
\eeq
where $C_{ll'}^j$ is defined and computed in \eqnref{Cllj1} and \eqnref{Cllj2}.

To compute the second term on the right-hand side of \eqref{mderi}, namely, $\la \psi_{l'} p(0,0), \p_j f_{0,0}^{(l)} \ra$, we first observe from \eqnref{derieqn} that
$$
\p_j f^{(l)}_{0,0} = \Acal (0,0)^{-1} \left[ \p_j g^{(l)}_{0,0} - \p_j \Acal (0,0) [f^{(l)}_{0,0}] \right].
$$
Thus we have
$$
\left\la \psi_{l'} p(0,0), \p_j f_{0,0}^{(l)} \right\ra = \left\la  (\Acal (0,0)^{-1})^*[\psi_{l'} p(0,0)], \p_j g^{(l)}_{0,0} - \p_j \Acal (0,0) [f^{(l)}_{0,0}] \right\ra,
$$
where $(\Acal (0,0)^{-1})^*$ is the adjoint operator of $\Acal (0,0)^{-1}$. In view of \eqnref{Ainv}, we have
\beq\label{V2}
(\Acal (0,0)^{-1})^*[\psi_{l'} p(0,0)] = \psi_{l'} \Gg_1 r_e \Gr (1/2+\mu) \begin{bmatrix} -1\\ \Gr^2  \end{bmatrix} =: \psi_{l'} V_2,
\eeq
and hence
\beq\label{2term}
\left\la \psi_{l'} p(0,0), \p_j f_{0,0}^{(l)} \right\ra = \left\la \psi_{l'} V_2, \p_j g^{(l)}_{0,0} - \p_j \Acal (0,0) [f^{(l)}_{0,0}] \right\ra .
\eeq

For ease of notation, let $V_3:= (0,1)^\top$.
Then, one can see from \eqnref{gl} that $\p_j g^{(l)}_{0,0}$ are given by the following:
\beq
\p_j g^{(l)}_{0,0} =
\begin{cases}
- 2 r_e \psi_l Y_j V_3, & j=1, \\
- \left(4r_e \psi_l Y_j - r_e \frac{\p Y_j}{\p x_l} \right) V_3, & j \neq 1.
\end{cases}
\eeq
Note $V_2 \cdot V_3= \Gg_1 r_e \Gr^3 (1/2+\mu)$. Now straightforward but tedious computations yield for $1\leq l\leq l'\leq 3$
\beq\label{g}
\left\la \psi_{l'} V_2,  \p_j g^{(l)}_{0,0}  \right\ra = \frac{4\pi }{15} \Gg_1 r_e^2 \Gr^3 (1/2+\mu) \times
\begin{cases}
	 - \frac{2\sqrt{15}}{3} & \mbox{if } (l,l',j)= (1,1,1), (2,2,1), (3,3,1), \\
	- \frac{1}{\sqrt{3}} & \mbox{if }  (1,1,4),(2,2,4),\\
	1 & \mbox{if } (1,1,6), (1,2,2), (1,3,5), (2,3,3),\\
	- 1 & \mbox{if } (2,2,6),\\
	 \frac{2}{\sqrt{3}} & \mbox{if }  (3,3,4),\\
	0 \quad & \mbox{otherwise}.
\end{cases}
\eeq

It now remains to calculate $\la \psi_{l'} V_2, \p_j \Acal (0,0) [f^{(l)}_{0,0}] \ra$. By \eqref{CD} and \eqref{f0}, we have
\begin{align*}
&\left\la \psi_{l'} V_2, \p_j \Acal (0,0) [f^{(l)}_{0,0}] \right\ra \\
&= \Gg_1 r_e \Gr (1/2+\mu) \Gg_2 r_e^2 \int_{S^2}  \psi_{l'}  \begin{bmatrix} -1\\ \Gr^2 \end{bmatrix}  \cdot \begin{bmatrix}
\Gr \p_j C(0,0)[\psi_l]\\
 - \p_j D(0,0)[\psi_l]  +\Gr  \p_j B(0)[\psi_l]
\end{bmatrix} dS \\
&= \Gg_1 r^3_e \Gr^2 (1/2+\mu) \Gg_2 \int_{S^2}  \psi_{l'} \left[ -\p_j C(0,0)[\psi_l] - \Gr \p_j D (0,0)[\psi_l] +\Gr^2  \p_j B(0)[\psi_l]\right] dS.
\end{align*}
It then follows from \eqref{pB list} and \eqref{CD list final} that for $1\leq l\leq l'\leq 3$:
\beq\label{f}
\left\la \psi_{l'} V_2, \p_j \Acal (0,0) [f^{(l)}_{0,0}] \right\ra  = \frac{16\pi}{45}\Gr^3 r_e^2 \Gg_1 \times
\begin{cases}
	\frac{1}{\sqrt{3}}  & \mbox{if } (l,l',j)=(1,1,4), (2,2,4), \\
	-1  & \mbox{if } (1,1,6),  (1,2,2), (1,3,5), (2,3,3), \\
	 1 & \mbox{if } (2,2,6),  \\
	-\frac{2}{\sqrt{3}} & \mbox{if } (3,3,4), \\
	0 \quad & \mbox{otherwise}.
\end{cases}
\eeq

We then have from \eqref{mderi}, \eqref{1term}, \eqref{2term}, \eqref{g} and \eqref{f} that for $1\leq l\leq l'\leq 3$:
\begin{equation*}
\p_j m_{ll'}(0,0) = \pi \Gr^3 r_e^2 \Gg_1 \times
\begin{cases}
	-\frac{4}{3\sqrt{15}}  (\frac{1}{2} + 3\mu)& \mbox{if } (l,l',j)=(1,1,1), (2,2,1), (3,3,1), \\
	-\frac{28}{45\sqrt{3}} & \mbox{if } (1,1,4), (2,2,4), \\
	\frac{28}{45} & \mbox{if } (1,1,6),  (1,2,2), (1,3,5), (2,3,3), \\
	-\frac{28}{45} & \mbox{if } (2,2,6),  \\
	\frac{56}{45\sqrt{3}} & \mbox{if } (3,3,4), \\
	0 \quad & \mbox{otherwise}.
\end{cases}
\end{equation*}
Thus,
\begin{align*}
&\frac{\p (m_{11}, m_{12}, m_{13},m_{22}, m_{23}, m_{33})}{\p (b_1,b_2, b_3,b_4,b_5, b_6)} (0,0) \\
&= \left(\pi \Gr^3 r_e^2 \Gg_1 \right)^6  \det
\begin{bmatrix}
-\frac{4}{3\sqrt{15}} (\frac{1}{2 } + 3\mu)& 0& 0 &-\frac{28}{45\sqrt{3}}  & 0 & \frac{28}{45 }\\
0 & \frac{28}{45 } & 0 &0&0&0 \\
0 &0&0&0& \frac{28}{45 } &0 \\
-\frac{4}{3\sqrt{15}} (\frac{1}{2 } + 3\mu)& 0& 0 &-\frac{28}{45\sqrt{3}}  & 0 & -\frac{28}{45 }\\
0 & 0 & \frac{28}{45 } &0&0&0 \\
-\frac{4}{3\sqrt{15}} (\frac{1}{2 } + 3\mu) & 0& 0 &\frac{56}{45\sqrt{3}} & 0 & 0
\end{bmatrix} \\
&=  \left(\Gr^3 r_e^2 \gamma_1 \pi  \right)^6  \frac{4}{3\sqrt{15}} (\frac{1}{2 } + 3\mu) \left(\frac{28}{45 } \right)^4 \frac{28}{45\sqrt{3}} 6 \neq 0.
\end{align*}
Thus, \eqnref{bJacob} is proved.

\begin{rem}
By switching roles of $h$ and $b$, let $M(b,h)$ be the polarization tensor associated with domain $(\Omega_h,D_b)$. Similar computations yield
\begin{equation*}
\frac{\p (m_{11}, m_{12}, m_{13},m_{22}, m_{23}, m_{33})}{\p (b_1,b_2, b_3,b_4,b_5, b_6)} (0,0) =  \left(\Gr r_e^2 \Gg_1 \pi  \right)^6   \frac{4}{3\sqrt{15}} (\frac{1}{2 } + 3\mu) \left(\frac{28}{45 } \right)^4 \frac{28}{45\sqrt{3}} 6 \neq 0.
\end{equation*}
Thus we have Theorem \ref{main_thm2}.
\end{rem}

\section*{Conclusion}

In this paper we consider the problem of the PT-vanishing inclusion (or the weakly neutral inclusion) of the core-shell structure: Given a domain of arbitrary shape find a domain enclosing the given domain so that the core-shell structure is PT-vanishing. We show that such a domain for shell exists if the given domain is a small perturbation of a ball. The result of this paper is a proof of existence. As far as we are aware of, there is no known method of constructing such domains for shells. Even shells for ellipses or ellipsoids are not known. Thus it is quite interesting to find a way to construct shells for the PT-vanishing structure. In this regard, we mention that there is a numerical attempt to construct the PT-vanishing structure using shape derivative \cite{FKL}.


\end{document}